\def\F {{\mathcal F}}
\def\S {{\mathcal S}}
\def\W {{\mathcal W}}
\newtheorem{thm}{Theorem}[section]
\newtheorem{lemma}[thm]{Lemma}
\newtheorem{cor}[thm]{Corollary}
\newtheorem{conj}[thm]{Conjecture}
\newtheorem{claim}[thm]{Claim}
\theoremstyle{definition}
\title{Minimum Size of Feedback Vertex Sets of Planar Graphs of Girth at least Five}
\author{Tom Kelly\thanks{Department of Combinatorics and Optimization, University of Waterloo, Waterloo, Ontario, Canada. Email: \texttt{t9kelly@uwaterloo.ca}}
\and
Chun-Hung Liu\thanks{Department of Mathematics, Princeton University, Princeton, New Jersey, USA. Email: \texttt{chliu@math.princeton.edu}}
}
\date{\today}
\begin{document}
\maketitle

\begin{abstract}
A feedback vertex set of a graph is a subset of vertices intersecting all cycles.
We provide tight upper bounds on the size of a minimum feedback vertex set in planar graphs of girth at least five.  
We prove that if $G$ is a connected planar graph of girth at least five on $n$ vertices and $m$ edges, then $G$ has a feedback vertex set of size at most $\frac{2m-n+2}{7}$.  
By Euler's formula, this implies that $G$ has a feedback vertex set of size at most $\frac{m}{5}$ and $\frac{n-2}{3}$.  
These results not only improve a result of Dross, Montassier and Pinlou and confirm the girth-5 case of one of their conjectures, but also make the best known progress towards a conjecture of Kowalik, Lu\v{z}ar and \v{S}krekovski and solves the subcubic case of their conjecture.
An important step of our proof is providing an upper bound on the size of minimum feedback vertex sets of subcubic graphs with girth at least five with no induced subdivision of members of a finite family of non-planar graphs.
\end{abstract}

\section{Introduction}

In this paper, graphs are simple.
A {\em feedback vertex set} of a graph $G$ is a set $S\subseteq V(G)$ such that $G-S$ is a forest.  
We define $\phi(G)$ to be the minimum size of a feedback vertex set of a graph $G$.  
Feedback vertex sets have been extensively studied.
For example, given a graph $G$ and an integer $k$, deciding if $\phi(G) \leq k$ is one of Karp's original NP-complete problems \cite{K72}.  

A direction in the study of feedback vertex sets is to find an upper bound on the minimum size of a feedback vertex set (for example, see \cite{amt, p, s}).
An old conjecture due to Albertson and Berman \cite{AB79} states that every planar graph on $n$ vertices has a feedback vertex set of size at most $\frac{n}{2}$.
This conjecture, if true, implies that every planar graph on $n$ vertices has an independent set of size at least $\frac{n}{4}$.
This bound for the independent set is an immediate corollary of the Four Color Theorem, but it is the only proof in the literature.
Albertson and Berman's conjecture remains open.
The best known result is that every planar graph $G$ on $n$ vertices has a feedback vertex set with size at most $\frac{3n}{5}$, due to Borodin's acyclic 5-coloring theorem \cite{B76}.

The {\it girth} of a graph is the length of its shortest cycle.
(If the graph has no cycle, then its girth is infinity.)
Recently, the minimum size of feedback vertex sets of planar graphs with girth at least five has attracted attention.
Kowalik et al.\ \cite{KLS10} proposed the following conjecture, which is tight as the dodecahedron attains the bound.

\begin{conj}[\cite{KLS10}] \label{girth 5 vertices conj}
If $G$ is a planar graph of girth at least five on $n$ vertices, then $\phi(G)\leq\frac{3n}{10}$.
\end{conj}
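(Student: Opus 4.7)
The plan is to attempt a discharging argument on the planar structure combined with a structural reduction to subcubic graphs, following the broad strategy of the paper but pushing the analysis to the tight value $\tfrac{3n}{10}$. Since the dodecahedron achieves the bound and is $3$-regular with all faces of length~$5$, any successful proof must treat a $3$-regular planar graph whose faces are all $5$-faces as the extremal configuration, so purely local arguments will not suffice on their own.

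First, I would reduce to the case where $G$ is $2$-connected with minimum degree at least $2$: a leaf can be removed without changing $\phi$, and a degree-$2$ vertex on a path can be suppressed. Then I would try to reduce further to the subcubic case. Although simply putting a single vertex of degree $\geq 4$ into the FVS and inducting does not by itself save the required density, one typically finds in problems of this type that a high-degree vertex either creates a small reducible configuration (a short separating pair, a short cycle through the vertex, a ``near-cycle'' subgraph that collapses upon deletion) or participates in so many cycles that a multi-vertex reduction succeeds. After exhausting such reductions, one may assume $G$ is subcubic, $2$-connected, girth $\geq 5$, and free of a finite list of small configurations.

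In the subcubic case, I would invoke the paper's lemma on subcubic graphs of girth $\geq 5$ that avoid induced subdivisions of a finite family of non-planar graphs. Since planarity automatically forbids every non-planar subdivision, this lemma provides a baseline bound on $\phi$; the challenge is to sharpen this bound from the currently known $\tfrac{n-2}{3}$ down to $\tfrac{3n}{10}$. The mechanism would be a weighted discharging scheme in which initial charges are assigned to vertices and faces, calibrated via Euler's formula for planar graphs of girth $\geq 5$ so that the total is essentially $\phi(G) - \tfrac{3n}{10}$, and local discharging rules eliminate positive residual charges unless one of the already ruled-out reducible configurations appears.

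The main obstacle will be closing the gap $\tfrac{n-2}{3} - \tfrac{3n}{10} = \tfrac{n-20}{30}$, which vanishes exactly at $n = 20$: local reducible configurations by themselves cannot deliver savings that grow linearly in $n$, since the dodecahedron shows that no small configuration can be globally forbidden. Overcoming this seems to require a genuinely global step — either a discharging scheme that transfers charge across long distances (so that regions of dense $5$-faces are penalized collectively), or a structure theorem showing that a planar girth-$5$ graph free of a finite list of small configurations must look ``dodecahedron-like,'' on which class the bound $\phi(G) \leq \tfrac{3n}{10}$ can be verified directly. This global step is the point at which an actual proof of the conjecture would have to go beyond the techniques that give the paper's $\tfrac{2m-n+2}{7}$ bound.
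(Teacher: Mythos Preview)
The statement you are attempting to prove is Conjecture~\ref{girth 5 vertices conj}, which the paper does \emph{not} prove in general. The paper establishes only the weaker bound $\phi(G)\le \frac{n-2}{3}$ for arbitrary planar graphs of girth at least five (Corollary~\ref{planar cor edges}, part~1), and confirms the conjectured bound $\frac{3n}{10}$ only in the \emph{subcubic} case (Corollary~\ref{planar cor edges}, part~2), via the inequality $m\le \frac{3n}{2}$ substituted into Theorem~\ref{main thm}. There is therefore no ``paper's own proof'' to compare against for the full statement.

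Your proposal is not a proof either, and you acknowledge this explicitly. The outline you give---reduce to 2-connected, reduce to subcubic, then apply a discharging or structural argument to close the gap between $\frac{n-2}{3}$ and $\frac{3n}{10}$---is a reasonable plan, but the reduction to subcubic is exactly where the argument breaks. Deleting a vertex $v$ of degree $d\ge 4$ and placing it in the feedback vertex set costs $1$ against a budget change of $\frac{3}{10}$ per deleted vertex, so you would need $\phi(G-v)\le \frac{3(n-1)}{10}-1+1=\frac{3n-3}{10}$, i.e.\ you need a saving of $\frac{7}{10}$ from deleting $v$, whereas induction gives you only $\frac{3}{10}$. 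No amount of local reducible-configuration analysis around high-degree vertices is known to recover this deficit. You correctly diagnose that the dodecahedron (cubic, all faces pentagons) being extremal means no purely local reduction suffices, and that a global structure theorem or long-range discharging would be required; but you do not supply one, so the proposal has a genuine, explicitly admitted gap at precisely the step that would constitute a proof of the conjecture.
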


Dross et al.\ \cite{DMP14} proved that if $G$ is a planar graph of girth at least five with $m$ edges, then $\phi(G) \leq \frac{5m}{23}$.
Note that every planar graph on $n$ vertices with girth at least five has at most $\frac{5}{3}(n-2)$ edges by Euler's formula (when $n \geq 4$).
So every planar graph with girth at least five on $n$ vertices has a feedback vertex set of size at most $\frac{25n-50}{69}$ (when $n \geq 4$).

In a companion paper, Dross et al.\ \cite{DMP15} made the following conjecture.

\begin{conj}[\cite{DMP15}] \label{high girth conj}
If $G$ is a planar graph of girth at least $g$ with $m$ edges, then $\phi(G) \leq \frac{m}{g}$.
\end{conj}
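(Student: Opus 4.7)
My plan is to prove Conjecture~\ref{high girth conj} by strengthening it to a linear inequality in both $m$ and $n$ that collapses to $\phi(G) \leq m/g$ via Euler's formula. For a planar graph of girth at least $g$, Euler's formula yields $n - 2 \geq \frac{(g-2)m}{g}$, so it is enough to exhibit nonnegative constants $\alpha_g,\beta_g,\gamma_g$ with $g\alpha_g - (g-2)\beta_g \leq 1$ and to prove $\phi(G) \leq \alpha_g m - \beta_g(n-2) + \gamma_g$ for every connected planar graph $G$ of girth at least $g$. Extremal graphs (e.g.\ the dodecahedron when $g=5$) force the coefficient inequality to be tight, which together with Euler's formula pins down the shape of the target bound. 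The advantage of an $(m,n)$-linear bound is that it degrades predictably under vertex deletion and edge contraction, so it meshes well with induction.

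I would argue by induction on $|V(G)|$, studying a minimum counterexample. The first task is to rule out reducible configurations: degree-$\leq 1$ vertices (removable for free), degree-$2$ vertices (absorbed by contraction), short paths of degree-$2$ vertices, and degree-$3$ vertices with prescribed local neighborhoods. For each such configuration I would exhibit a local surgery producing a smaller planar graph $G'$ of girth at least $g$ such that a minimum FVS of $G'$ extends to a FVS of $G$ within the allowed size. Each reducible configuration translates into a numerical inequality relating the decrements in $m$ and $n$ to the number of vertices added to the FVS, and the constants $\alpha_g,\beta_g,\gamma_g$ must be chosen to satisfy all of these simultaneously.

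Once the reducible configurations are excluded, the remaining graph should be rigid enough to attack by discharging. I would assign each vertex and face initial charges (for example, $\deg(v)-4$ and $\ell(f)-4$) calibrated so Euler's formula makes the total strictly negative, redistribute charge from long faces to low-degree vertices using rules matched to the reducible configurations, and derive a contradiction from the fact that every final charge is nonnegative.

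The hardest step, as the abstract already hints, is the subcubic case. When $\Delta(G) \leq 3$ virtually every vertex is low-degree, so the generic reductions above stall, and a separate sharper bound on $\phi$ for subcubic planar graphs of girth at least $g$ is needed. The paper's approach for $g=5$ goes through subcubic graphs of girth at least $5$ that forbid induced subdivisions of a specific finite family of non-planar graphs, and then invokes this auxiliary result inside the discharging step. For general $g$, identifying the correct finite family of non-planar obstructions and establishing a matching FVS bound on the corresponding subcubic class is, I expect, the principal technical barrier --- and is plausibly the reason the conjecture remains open beyond $g=5$.
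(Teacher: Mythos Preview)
The statement you are addressing is a \emph{conjecture}, not a theorem: the paper does not prove Conjecture~\ref{high girth conj} for general $g$, and there is consequently no ``paper's own proof'' against which to compare your proposal. What the paper actually establishes is only the case $g=5$, obtained as a corollary of the sharper bound $\phi(G)\leq \frac{2m-n+2}{7}$ (Theorem~\ref{main thm}) combined with Euler's formula. Your write-up seems partly aware of this, since your final paragraph concedes that the conjecture ``remains open beyond $g=5$''; but then what precedes it is a strategy sketch, not a proof, and several of its steps (identifying the correct reducible configurations for arbitrary $g$, designing discharging rules that mesh with them, and above all handling the subcubic case) are left entirely unspecified.

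Even restricting attention to $g=5$, your outline diverges from the paper's route. The paper does \emph{not} use discharging at any point. Instead, Section~\ref{sec:reduction} reduces a minimum counterexample to Theorem~\ref{main thm} to the subcubic case by a short direct argument: one shows the counterexample is $2$-edge-connected, has maximum degree four with every degree-four vertex a cut-vertex, and then analyses an end-block together with the auxiliary graph obtained by adding a chord across its attachment vertices, invoking Theorem~\ref{strong subcubic} on that auxiliary graph. The heavy lifting is entirely in the subcubic theorem (Sections~\ref{sec:r}--\ref{sec:subcubic proof}), which is proved not by discharging but by a structural induction with carefully designed error terms $r(G)$ tied to the explicit families~$\F_{i,j}$, ultimately forcing the minimum counterexample to be the dodecahedron. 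Your proposal's idea of strengthening to an $(m,n)$-linear inequality is indeed what the paper does for $g=5$, but the machinery you suggest for executing it (reducible configurations plus discharging) is not what the paper uses, and you give no evidence that such machinery would succeed even at $g=5$, let alone for general~$g$.
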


In this paper, we prove the following.

\begin{thm}\label{main thm}
If $G$ is a connected planar graph of girth at least five on $n$ vertices and $m$ edges, then $\phi(G)\leq \frac{2m - n + 2}{7}$.
\end{thm}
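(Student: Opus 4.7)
The plan is to proceed by induction on $|V(G)|$, analyzing a minimum counterexample $G$, reducing it to the cubic case, and then invoking the structural theorem for subcubic graphs of girth at least five announced in the abstract.

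First, I would establish elementary degree reductions. A vertex of degree at most one lies on no cycle and can be deleted without affecting $\phi$, while the bound $\frac{2m-n+2}{7}$ decreases by at least $\frac{1}{7}$, so induction applies. A vertex of degree two can be handled by suppression (when this does not violate the girth condition) or by a small local exchange argument that places either the vertex or one of its neighbors into the feedback vertex set; either way the inequality is preserved. Next, if $G$ has a vertex $v$ of degree $d \geq 4$, I place $v$ in the feedback vertex set and apply induction to $G - v$ (treating components separately, with the additive $+2$ term handled by connectedness of $G$). This yields $\phi(G) \leq \frac{2(m-d) - (n-1) + 2}{7} + 1 = \frac{2m-n+2}{7} - \frac{2d-8}{7} \leq \frac{2m-n+2}{7}$ when $d \geq 4$. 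Hence $G$ may be assumed cubic, in which case $m = \frac{3n}{2}$ and the target simplifies to $\phi(G) \leq \frac{2n+2}{7}$.

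For the cubic planar case, the strategy is to prove a more general theorem: for every subcubic graph $H$ of girth at least five that contains no induced subdivision of any graph in a fixed finite family $\mathcal{F}$ of non-planar graphs, $\phi(H)$ satisfies the analogous inequality. A planar graph trivially satisfies the hypothesis, since any subdivision of a non-planar graph is itself non-planar. The subcubic theorem in turn is proved by induction: identify a finite list of \emph{reducible configurations} --- small induced subgraphs whose deletion or contraction produces a smaller graph still satisfying the hypotheses and from which a feedback vertex set of the required size can be extended back --- and then show via a discharging argument that every subcubic graph of girth at least five satisfying the hypothesis either contains a reducible configuration or itself contains an induced subdivision of some member of $\mathcal{F}$.

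The main obstacle will be the discharging step of the subcubic theorem. Because the bound is tight (achieved by the dodecahedron, which is cubic planar of girth exactly five with $\phi = 6 = \frac{2 \cdot 20 + 2}{7}$), the reducible configurations and the discharging rules must be calibrated very precisely: any irreducible subcubic graph of girth at least five that locally resembles the dodecahedron must be forced, through the discharging inequality, to contain a non-planar induced subdivision. Determining the exact finite family $\mathcal{F}$ and matching each possible bad local configuration --- in particular, configurations around short paths of degree-three vertices and around $5$-faces --- to either a reducible configuration or a specific forbidden non-planar graph is where the bulk of the technical work lies.
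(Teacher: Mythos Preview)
Your reduction to the cubic case has a genuine gap at degree-$4$ cut-vertices. When $v$ has degree $4$ and $G-v$ is connected, your computation $\phi(G)\le \frac{2(m-4)-(n-1)+2}{7}+1=\frac{2m-n+2}{7}$ is fine. But if $G-v$ has two components $G_1,G_2$ (which is exactly what happens in a minimum counterexample once you know it is $2$-edge-connected), induction only gives
\[
\phi(G)\le 1+\sum_{i=1}^{2}\frac{2|E(G_i)|-|V(G_i)|+2}{7}=\frac{2(m-4)-(n-1)+4}{7}+1=\frac{2m-n+4}{7},
\]
which overshoots by $\frac{2}{7}$. Your remark that the extra $+2$ is ``handled by connectedness of $G$'' does not recover this loss; connectedness of $G$ bounds the number of components of $G-v$ by $d$, not by $1$. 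The paper in fact proves that in a minimum counterexample every degree-$4$ vertex \emph{is} a cut-vertex, so this is precisely the case that survives and must be dealt with.

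The paper closes this $\frac{2}{7}$ gap not by reducing to the cubic case, but by analyzing an end-block $B$ (necessarily subcubic) and replacing the cut-vertex by an edge $v_1v_2$ between its two neighbors in $B$. The resulting graph $B'$ need not have girth $\ge 5$; it can contain one short cycle through $v_1v_2$, but not two disjoint ones. This is why the subcubic theorem used is stronger than what you propose: it applies to subcubic graphs with no two disjoint short cycles (rather than girth $\ge 5$), and it carries an explicit correction term $r(B')$ with the property that $r(B')>\frac{2}{7}$ forces a triangle in $B'$, hence a $4$-cycle in $B$, contradicting the girth hypothesis. Your version of the subcubic theorem, stated only for girth $\ge 5$, would not be applicable to $B'$, and so even combined with the correct end-block reduction it would not finish the argument. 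Your degree-$2$ suppression step is likewise too sketchy: suppression can drop the girth to $4$, and the ``small local exchange'' you allude to is not specified.
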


A graph is {\it subcubic} if every vertex has degree at most three.
It is easy to see that every subcubic graph on $n$ vertices has at most $\frac{3n}{2}$ edges.
Together with the fact that every non-tree planar graph on $n$ vertices with girth at least five has at most $\frac{5}{3}(n-2)$ edges, the following are immediate corollaries of Theorem \ref{main thm}.

\begin{cor}\label{planar cor edges}
Let $G$ be a graph on $n$ vertices and $m$ edges with $n \geq 2$.
	\begin{enumerate}
		\item If $G$ is a planar graph with girth at least five, then $\phi(G) \leq \frac{m}{5}$, and $\phi(G) \leq \frac{n-2}{3}$.
		\item If $G$ is a planar subcubic graph with girth at least five, then $\phi(G) \leq \frac{3n}{10}$.
		\item If $G$ is a connected planar subcubic graph with girth at least five, then $\phi(G) \leq \frac{2n+2}{7}$.
	\end{enumerate}
\end{cor}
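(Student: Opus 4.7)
The plan is to apply Theorem~\ref{main thm} componentwise, combined with two elementary edge bounds: Euler's formula gives $m \leq 5(n-2)/3$ for a planar graph of girth at least five that contains a cycle, and every subcubic graph satisfies $m \leq 3n/2$.

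For part (1), I would first dispose of tree components, which contribute $0$ to $\phi$ and satisfy both target bounds trivially (the case $n=2$ uses that $G$ is assumed to have at least two vertices). For a component $C$ with a cycle, Euler gives $m_C \leq 5(n_C-2)/3$, and substituting into $\phi(C) \leq (2m_C - n_C + 2)/7$ from Theorem~\ref{main thm} yields both $\phi(C) \leq m_C/5$ and $\phi(C) \leq (n_C-2)/3$ by direct algebra (both chase down to the same inequality $m_C \leq 5(n_C-2)/3$). Summing over components gives $\phi(G) \leq m/5$, and likewise $\phi(G) \leq (n-2)/3$ as soon as at least one component has a cycle, because $\sum_{C \text{ with a cycle}}(n_C-2) \leq n-2$ then; otherwise $G$ is a forest, $\phi(G)=0$, and the bound is trivial.

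Part (3) is immediate: since $G$ is connected and subcubic, $m \leq 3n/2$, and Theorem~\ref{main thm} gives $\phi(G) \leq (2m - n + 2)/7 \leq (2n+2)/7$.

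For part (2), the main subtlety is that neither of the two natural bounds --- the Euler bound $\phi(C) \leq (n_C-2)/3$ from part (1), nor the subcubic bound $\phi(C) \leq (2n_C+2)/7$ from part (3) --- alone gives the target $3n_C/10$ for all component sizes. I would therefore apply the argument to each component $C$ and split into cases: for $n_C \leq 20$ use $(n_C-2)/3 \leq 3n_C/10$, and for $n_C \geq 20$ use $(2n_C+2)/7 \leq 3n_C/10$. Both inequalities are equivalent to $n_C$ lying on the appropriate side of $20$, and the two bounds agree at $n_C=20$, which is exactly the order of the dodecahedron (the extremal example). Summing over components then gives $\phi(G) \leq 3n/10$. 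The crossover at $n_C=20$ is the only nontrivial step; everything else is routine arithmetic from Theorem~\ref{main thm} and the two edge bounds.
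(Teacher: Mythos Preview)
Your argument is correct and matches the paper's approach: the paper gives no detailed proof of this corollary, merely asserting that all three parts follow immediately from Theorem~\ref{main thm} together with Euler's inequality $m\le \frac{5}{3}(n-2)$ and the subcubic bound $m\le \frac{3n}{2}$. Your componentwise derivation, including the crossover at $n_C=20$ for part~(2), is exactly the computation one must carry out to make this ``immediate'' claim precise; the only omission is the trivial case of an isolated-vertex component in part~(2), where $\phi(C)=0\le 3n_C/10$ directly.
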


The first statement of Corollary \ref{planar cor edges} improves the result of Dross et al.\ about Conjecture \ref{girth 5 vertices conj} mentioned earlier and solves Conjecture \ref{high girth conj} for the case $g=5$.
The second or the third statement of Corollary \ref{planar cor edges} solves Conjecture \ref{girth 5 vertices conj} for the case when $G$ is subcubic.
Note that the dodecahedron attains the bound in Theorem \ref{main thm}.

Our strategy for proving Theorem \ref{main thm} is first proving the case when $G$ is a subcubic graph, and then boosting it to the general case.
In fact, instead of proving the subcubic case of Theorem \ref{main thm}, we will prove a stronger result: the planarity will be replaced by the property of the lack of induced subdivision of graphs in a finite family of non-planar graphs.
We say that a graph $G$ contains an {\it induced subdivision} of another graph $H$ if $G$ contains an induced subgraph that can be obtained from $H$ by repeatedly subdividing edges.
For every even number $n$ with $n \geq 6$, let $M_n$ be the cubic graph obtained from the $n$-cycle by adding edges connecting each opposite pair of vertices.
Notice that $M_n$ is not planar but any of its proper induced subgraphs is.
So unlike the minor relation or the subdivision relation, there is no Kuratowski-type theorem for planarity with respect to the induced subdivision relation, even if we restrict the problem to subcubic graphs.

Furthermore, in order to make the induction go through, we relax the girth condition to allow the existence of short cycles, but we do not allow two disjoint short cycles. 
The following is our result about subcubic graphs.
The family $\F$ and the function $r$ mentioned in the description will be explicitly described in this paper.

\begin{thm} \label{strong subcubic}
There exists a finite family $\F$ of non-planar 2-connected subcubic graphs and a function $r$ such that if $G$ is a connected subcubic graph with no two disjoint cycles of length less than five and $G$ does not contain an induced subdivision of a member of $\F$, then $\phi(G)\leq\frac{2\lvert E(G) \rvert - \lvert V(G) \rvert + 2}{7} + r(G)$, and $r(G)$ satisfies the following.
	\begin{enumerate}
		\item $r(G) \leq \frac{4}{7}$. 
		\item If $G \neq K_4$, then $r(G) \leq \frac{3}{7}$; if $r(G) = \frac{3}{7}$, then $G$ contains a triangle.
		\item If $r(G) > 0$ and $G$ is 2-connected, then for all $e\in E(G)$, $\phi(G-e)\leq\frac{2\lvert E(G) \rvert - \lvert V(G) \rvert - 5}{7} + r(G)$.
		\item If $G$ is planar and has girth at least five, then $r(G)=0$.
		\end{enumerate}
\end{thm}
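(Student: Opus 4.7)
The plan is to prove Theorem \ref{strong subcubic} by induction on a measure such as $2\lvert E(G)\rvert - \lvert V(G)\rvert$ (breaking ties by $\lvert V(G)\rvert$), proving all four clauses simultaneously. The family $\F$ would be specified explicitly as a finite list of non-planar $2$-connected subcubic obstructions, prominently including the M\"obius-ladder-type graphs $M_n$ (or rather, since $\F$ must be finite, a small constant-size set of such graphs whose induced subdivisions capture the other $M_n$'s). The function $r$ would be defined piecewise: $r(K_4)=\frac{4}{7}$, a small explicit list of low-girth or non-planar $2$-connected exceptions receive values up to $\frac{3}{7}$ (all containing a triangle in the $\frac{3}{7}$ case), and $r(G)=0$ otherwise --- in particular whenever $G$ is planar with girth at least five, giving clause~(4) for free.

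The bulk of the proof is then a list of \emph{reducible configurations}. For a putative minimum counterexample $G$, I would first rule out vertices of degree $\leq 1$ and handle degree-$2$ vertices by suppression, checking that the smaller graph $G'$ still satisfies the hypotheses (no two disjoint short cycles, no induced subdivision of a member of $\F$) and that a feedback vertex set for $G'$ of the bounded size lifts to one for $G$ where the increase in $\phi$ is absorbed by the change in $2m-n$. Next, I would handle any $3$- or $4$-cycle via a localized reduction: since disjoint short cycles are forbidden, each such cycle lies in a bounded-size ``core,'' and deleting or contracting a carefully chosen vertex/edge in the core produces a smaller graph to which induction applies, with the $r$-term tracking the possible appearance or disappearance of a triangle. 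Clause~(3), on $\phi(G-e)$, is carried through each reduction so that whenever we must split across a $2$-edge-cut or a $2$-vertex-cut, both sides can be bounded and the bounds spliced together via an $r$-discount.

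The heart of the argument --- and, I expect, the main obstacle --- is the terminal structural step: showing that an irreducible $G$ (admitting none of the above reductions) with no induced subdivision of any member of $\F$ must be one of the finitely many small exceptions where clauses~(1), (2) already give the bound directly. The natural strategy is to walk outward from any short cycle (or, if girth is at least five, from any edge), using irreducibility to force each vertex to have degree three and each face-like region to have a prescribed structure; the absence of $M_n$-subdivisions then forces these regions to close up planarly, while irreducibility simultaneously forces them \emph{not} to close up planarly, yielding a contradiction unless $G$ is small. Making this argument airtight --- and, in parallel, choosing $\F$ small enough to be finite but rich enough in non-planar obstructions to drive the argument through --- is where the case analysis becomes delicate, particularly around $2$-edge-cuts and short cycles where clauses~(2) and~(3) constrain how much $r$ can absorb.

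I would expect the proof to interleave a few pages of clean reductions with a substantial endgame devoted to the structural characterization of $\F$-induced-subdivision-free irreducible graphs, with the $K_4$ and triangle-containing exceptions singled out to justify the sharp constants $\frac{4}{7}$ and $\frac{3}{7}$ in clauses~(1) and~(2).
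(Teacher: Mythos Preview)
Your plan has the right overall shape --- minimum counterexample, connectivity reductions, short-cycle reductions, then a structural endgame --- but two pieces are concretely off.

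First, your proposed choice of $\F$ cannot work. You suggest taking a finite subset of the M\"obius ladders $M_n$ ``whose induced subdivisions capture the other $M_n$'s,'' but the $M_n$ are all cubic, so no $M_n$ is an induced subdivision of any $M_{n'}$ with $n'\neq n$; they form an infinite antichain, and no finite family captures them. The paper mentions the $M_n$ only to illustrate that planarity has \emph{no} Kuratowski-type characterization under induced subdivision; its actual $\F$ is unrelated to them. Instead the paper builds a hierarchy $\F_{i,j}$ of graphs (starting from a loop and closing under single edge-subdivision and a local ``add a degree-$3$ vertex'' operation $\circ$), and takes $\F$ to be the handful of girth-$\geq 5$ members of $\F_{4,2}\cup\F_{4,3}$ together with those $H\in\F_{3,2}\cup\F_{3,3}$ for which $H-e$ has girth $\geq 5$ for some edge $e$. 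Non-planarity of every such $H$ is then a one-line Euler-formula count (Lemmas~\ref{girth 5 planar in F} and~\ref{-e girth 5}), not a structural fact about M\"obius ladders. Correspondingly, $r$ is not an ad hoc list but $r(G)=\sum_B \epsilon(B)$ over blocks, with $\epsilon(B)=\max\{(5-i)/7,0\}$ when $B\in\F_{i,j}$; clauses (1)--(4) then follow from elementary properties of the $\F_{i,j}$ rather than from tracking a finite list of exceptions.

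Second, your endgame --- ``absence of $M_n$-subdivisions forces regions to close up planarly, while irreducibility forces them not to'' --- is not how the argument terminates, and I do not see how to make that idea precise. The paper's reductions drive a minimum counterexample to be $2$-connected with $r(G)=0$, then internally $3$-edge-connected, then girth $\geq 5$, then cubic; at several steps the key observation is that the reduced graph lands in some $\F_{i,j}$, whence $G$ itself lies in some $\F_{i',j'}$ (via the subdivision/$\circ$ recursion), contradicting $r(G)=0$. The terminal step is not an internal contradiction but an appeal to an external structure theorem (\cite[Theorem~6.1]{kl}): a cubic girth-$\geq 5$ graph in which every vertex sees two disjoint $5$-cycles through its neighbours (Claim~\ref{claim:vertexadjacentto5-cycles}) must be the dodecahedron, which lies in $\F_{5,5}$ and hence is not a counterexample. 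Without the $\F_{i,j}$ machinery to absorb the reduced graphs and without that dodecahedron characterization, your outline does not close.
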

The proof of Theorem \ref{strong subcubic} requires explicit descriptions of the family $\F$ and the function $r$.
However, proving the general case is relatively simple as long as the subcubic case is established.
In Section \ref{sec:reduction}, we show how to prove Theorem \ref{main thm} assuming that Theorem \ref{strong subcubic} is proved.  
In Section \ref{sec:r}, we explicitly define the function $r$.  
In Section \ref{sec:subcubic proof}, we complete the paper by proving Theorem \ref{strong subcubic}.
The proof of Theorem \ref{strong subcubic} will extensively use strategies in one of our earlier papers \cite{kl}.

Here are some notions that will be frequently used in this paper.
Let $G$ be a graph and $X$ a subset of $V(G)$.
We define $G[X]$ to be the subgraph induced on $X$, and we define $G-X$ to be the subgraph induced on $V(G)-X$.
When $X$ consists of one vertex, say $v$, then we also denote $G-X$ by $G-v$.
A vertex $v$ in a graph $G$ is a {\em cut-vertex} if $G- v$ has more connected components than $G$.  
Similarly, an edge $e$ in $G$ is a {\em cut-edge} if $G- e$ has more connected components than $G$, where $G-e$ is the subgraph of $G$ obtained from $G$ by deleting the edge $e$.
For a positive integer $k$, a graph is {\it $k$-connected} if it has at least $k+1$ vertices and $G-Y$ is connected for every subset $Y$ of $V(G)$ with size at most $k-1$; a graph is {\it $k$-edge-connected} if $G-Y$ is connected for every subset $Y$ of $E(G)$ of size at most $k-1$.
A {\em block} of $G$ is a maximal connected subgraph without a cut-vertex.  
A block is {\em nontrivial} if it is not isomorphic to $K_1$ or $K_2$.  
Note that every non-trivial block is 2-connected and has minimum degree at least two.
So in a subcubic graph, every pair of nontrivial blocks is disjoint.
An {\em end-block} of $G$ is a block containing at most one cut-vertex of $G$.
Note also that every non-2-connected graph on at least three vertices contains at least two end-blocks.
For a pair of non-adjacent vertices $x,y$ of $G$, we define $G+xy$ to be the graph obtained from $G$ by adding the edge $xy$.
For a family ${\mathcal W}$ of multigraphs, we say that a multigraph $G$ is ${\mathcal W}$-free if it does not contain any induced subgraph isomorphic to a member of ${\mathcal W}$.


\section{Reducing Theorem \ref{main thm} to subcubic graphs}\label{sec:reduction}

In this section, we prove Theorem \ref{main thm}, assuming Theorem \ref{strong subcubic}.

\bigskip

\noindent{\bf Proof of Theorem \ref{main thm} assuming Theorem \ref{strong subcubic}.}
In the rest of this section, $G$ denotes a minimum counterexample to Theorem \ref{main thm}.  
That is, $G$ is connected, planar and of girth at least five with $\phi(G) > \frac{2\lvert E(G) \rvert-\lvert V(G) \rvert+2}{7}$, but for every connected planar graph $H$ with girth at least five with $\lvert V(H) \rvert + \lvert E(H) \rvert< \lvert V(G) \rvert + \lvert E(G) \rvert$, $\phi(H) \leq \frac{2\lvert E(H) \rvert - \lvert V(H) \rvert +2}{7}$.
We denote $\lvert V(G) \rvert$ and $\lvert E(G) \rvert$ by $n$ and $m$, respectively.

Since every graph in the family $\F$ mentioned in Theorem \ref{strong subcubic} is non-planar, $G$ does not contain an induced subdivision of a member of $\F$.
Hence $G$ contains a vertex of degree at least four by Theorem \ref{strong subcubic}.
In particular, $G$ contains at least five vertices.

\begin{claim}
$G$ is 2-edge-connected.
\end{claim}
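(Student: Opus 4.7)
The plan is to argue by contradiction. Suppose that $G$ has a cut-edge $e=uv$, and let $G_1$ and $G_2$ be the two components of $G-e$, with $n_i=\lvert V(G_i)\rvert$ and $m_i=\lvert E(G_i)\rvert$, so that $n_1+n_2=n$ and $m_1+m_2=m-1$. The goal is to use the minimality of $G$ to bound $\phi(G_1)$ and $\phi(G_2)$, and then combine these bounds by noting that a bridge lies on no cycle.

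First I would verify that the inductive hypothesis applies to each $G_i$. Each $G_i$ is connected by construction, is planar as a subgraph of $G$, and has girth at least five since deleting an edge cannot shorten any cycle. Moreover, since the cut-edge $e$ has an endpoint in each component, we have $n_i\geq 1$ and the other component is nonempty, so $n_i+m_i<n+m$. By the minimality of $G$ (applied separately to $G_1$ and to $G_2$), we obtain
\[
\phi(G_i)\leq\frac{2m_i-n_i+2}{7}\qquad\text{for }i\in\{1,2\}.
\]
The single-vertex case is harmless: if some $G_i=K_1$, then $\phi(G_i)=0\leq\frac{1}{7}=\frac{2m_i-n_i+2}{7}$.

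Next I would glue the feedback vertex sets together. Because $e$ is a cut-edge, it lies on no cycle of $G$, and every cycle of $G$ is entirely contained in $G_1$ or entirely contained in $G_2$. Consequently, the union of a minimum feedback vertex set of $G_1$ with one of $G_2$ is a feedback vertex set of $G$, giving
\[
\phi(G)\leq\phi(G_1)+\phi(G_2)\leq\frac{2m_1-n_1+2}{7}+\frac{2m_2-n_2+2}{7}=\frac{2(m-1)-n+4}{7}=\frac{2m-n+2}{7},
\]
contradicting the assumption that $\phi(G)>\frac{2m-n+2}{7}$.

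There is no real obstacle here beyond the bookkeeping just carried out; the inequality is crafted so that the constant $+2$ in the numerator exactly absorbs the loss of one edge (i.e., the $-2$ from $2(m-1)$ versus $2m$). The interesting part of reducing Theorem \ref{main thm} to Theorem \ref{strong subcubic} lies later, where one must handle cut-vertices and, above all, vertices of degree at least four; this preliminary claim is just the cleanest instance of the splitting argument that will be reused in those more delicate reductions.
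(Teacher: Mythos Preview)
Your proof is correct and follows exactly the same approach as the paper: split along the cut-edge, apply the minimality of $G$ to each component, and observe that the union of the two feedback vertex sets works because a bridge lies on no cycle. The paper's version is terser, but the argument and the arithmetic are identical.
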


\begin{proof}
Suppose to the contrary that $e\in E(G)$ is a cut-edge and $G-e$ has components $G_1$ and $G_2$.  By the minimality of $G$, for $i=1,2$, $G_i$ admits a feedback vertex set $S_i$ of size at most 
$\frac{2|E(G_i)| - |V(G_i)| + 2}{7}$.
Then $S_1\cup S_2$ is a feedback vertex set of $G$ of size at most 
$\frac{2(|E(G_1)| + |E(G_2)|) - (|V(G_1)| + |V(G_2)|) + 4}{7} \leq \frac{2m - n + 2}{7}$,
a contradiction.
\end{proof}

\begin{claim}\label{max degree four}
$G$ has maximum degree four, and every vertex of degree four is a cut-vertex.
\end{claim}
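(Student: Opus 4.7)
The plan is to suppose for contradiction that $G$ has a vertex $v$ which is either of degree at least $5$, or of degree $4$ but not a cut-vertex, and in either case to delete $v$, apply the minimality of the counterexample to each component of $G-v$, and recombine those feedback vertex sets with $v$ itself to contradict $\phi(G)>\frac{2m-n+2}{7}$.

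Write $d=\deg(v)$ and let $C_1,\dots,C_k$ be the components of $G-v$. Each $C_i$ is a connected planar graph of girth at least five with $|V(C_i)|+|E(C_i)|<n+m$, so by minimality it admits a feedback vertex set $S_i$ of size at most $\frac{2|E(C_i)|-|V(C_i)|+2}{7}$. Then $S=\{v\}\cup\bigcup_i S_i$ is a feedback vertex set of $G$, and using $\sum_i|V(C_i)|=n-1$ and $\sum_i|E(C_i)|=m-d$ a direct computation gives
\[
|S|\leq 1+\sum_{i=1}^{k}\frac{2|E(C_i)|-|V(C_i)|+2}{7}=\frac{2m-n+8-2d+2k}{7}.
\]
This beats $\frac{2m-n+2}{7}$ precisely when $d-k\geq 3$, which is the inequality I need to verify in both cases.

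The key input is the preceding claim that $G$ is 2-edge-connected: each component $C_i$ of $G-v$ must be joined to $v$ by at least two edges, since a single such edge would be a cut-edge of $G$. Hence $k\leq d/2$, which gives $d-k\geq 3$ whenever $d\geq 5$ (covering the maximum-degree statement), and also when $d=4$ and $k=1$, i.e., when $v$ is a non-cut-vertex of degree $4$ (covering the second statement).

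I do not anticipate a serious obstacle. The calculation is routine and the only point to double-check is that the minimality hypothesis applies to every component $C_i$, including trivial ones such as single vertices or trees; this is harmless since the bound $\frac{2|E(C_i)|-|V(C_i)|+2}{7}$ is nonnegative on any such piece and is trivially met by the empty feedback vertex set.
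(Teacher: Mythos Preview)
Your proposal is correct and follows essentially the same approach as the paper: delete $v$, apply minimality to each component of $G-v$, use 2-edge-connectedness to bound the number of components by $\lfloor d/2\rfloor$, and verify the resulting inequality $d-k\geq 3$ in the two cases $d\geq 5$ and $(d,k)=(4,1)$. The computations and the use of the preceding claim match the paper's proof almost line for line.
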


\begin{proof}
Let $v$ be a vertex of $G$ with degree at least four.  
Denote the degree of $v$ by $d(v)$.
Let $G_1,\dots, G_k$ be the components of $G-v$.  
Since $G$ is 2-edge-connected, $k\leq \lfloor \frac{d(v)}{2}\rfloor$.  
By the minimality of $G$, for $i=1,\dots, k$, $G_i$ admits a feedback vertex set $S_i$ of size at most 
$\frac{2|E(G_i)| - |V(G_i)| + 2}{7}$.
Then $S_1\cup \cdots \cup S_k\cup\{v\}$ is a feedback vertex set of $G$ of size at most 
$$1 + \sum_{i=1}^k \frac{2|E(G_i)| - |V(G_i)| + 2}{7} \leq \frac{2(m - d(v)) - (n - 1) + 2k}{7} + 1.$$
If $d(v) \geq 5$, then $d(v)-k \geq d(v)-\lfloor \frac{d(v)}{2}\rfloor \geq 3$, so $\phi(G) \leq \frac{2m-n+2}{7}$, a contradiction.
If $d(v)=4$ but $G-v$ is connected, then $k=1$ and $\phi(G)\leq \frac{2m-n+2}{7}$, a contradiction.
\end{proof}

By Claim \ref{max degree four}, $G$ is not 2-connected and contains at least five vertices.
So $G$ has at least two end-blocks.  
Let $B$ be an end-block of $G$.  
Let $n_B  = |V(B)|$ and $m_B = |E(B)|$.  
Since $B$ is an end-block of $G$, $B$ contains a unique $v\in V(G)$ such that $v$ is a cut-vertex in $G$.  
Since $G$ is 2-edge-connected, $v$ has degree four in $G$ and has precisely two neighbors, denoted by $v_1$ and $v_2$, in $B$.  
Note that $B$ is 2-connected, so $B$ is subcubic by Claim \ref{max degree four}.
Furthermore, $v_1$ is not adjacent to $v_2$, otherwise $vv_1v_2v$ is a triangle in $G$, a contradiction.

Let $B' = G[V(B)-\{v\}]+v_1v_2$.  
Since $B$ has girth at least five, every cycle in $B'$ of length less than five contains $v_1v_2$, so $B'$ does not contain disjoint cycles of length less than five.  Since $B'$ is subcubic, by Theorem \ref{strong subcubic}, $B'$ admits a feedback vertex set $S_1$ of size at most
$$\frac{2(m_B - 1) - (n_B - 1) + 2}{7} + r(B') = \frac{2m_B - n_B + 1}{7} + r(B').$$

Let $G' = G-(V(B)-\{v\})$.  By the minimality of $G$, $G'$ admits a feedback vertex set $S_2$ of size at most
$$\frac{2(m - m_B) - (n - n_B + 1) + 2}{7}  = \frac{2(m - m_B) - (n - n_B) + 1}{7}.$$
Therefore $S_1\cup S_2$ is a feedback vertex set of $G$ of size at most $\frac{2m - n + 2}{7} + r(B')$.
Since $G$ is a counterexample, $r(B') > 0$.  
Since $B'$ is 2-connected, by Theorem \ref{strong subcubic}, $B'-v_1v_2$ admits a feedback vertex set $S'_1$ of size at most $\frac{2m_B - n_B -5}{7} + r(B')$.
Since there are only two edges between $v$ and $G-V(B)$, and $G$ is 2-edge-connected, $G-V(B)$ is connected.
By the minimality of $G$, $G-V(B)$ admits a feedback vertex set $S'_2$ of size at most
$$\frac{2(m - m_B - 2) - (n - n_B) + 2}{7} = \frac{2(m - m_B) - (n - n_B) - 2}{7}.$$
Therefore, $S'_1\cup S'_2\cup\{v\}$ is a feedback vertex set of $G$ of size at most $\frac{2m - n}{7} + r(B')$.
So $r(B') > \frac{2}{7}$.  
By Theorem \ref{strong subcubic}, either $B'=K_4$ or $B'$ contains a triangle.  
Hence $B'$ contains a triangle.
However, $B$ can be obtained from $B'$ by subdividing an edge, so $B$ contains a 4-cycle, contradicting that $G$ has girth at least five.
This proves Theorem \ref{main thm}.


\section{Error Terms}\label{sec:r}
In this section we will describe the function $r$ mentioned in Theorem \ref{strong subcubic}.
We shall first define special families of graphs.

Let $G$ be a graph.
For $e_1,e_2 \in E(G)$ (not necessarily distinct) and $a \in V(G)$ with degree two, we define the graph $G \circ (e_1,e_2,a)$ to be the graph obtained from $G$ by subdividing $e_1$ and $e_2$ once, respectively, and adding a new vertex adjacent to $a$ and the two vertices obtained from subdividing $e_1$ and $e_2$.
Note that when $e_1=e_2$, it is subdivided twice.

Define $\F_{1,0}$ to be the family consisting of the multigraph on one vertex with one loop, and we define $\F_{i,j}=\emptyset$ if $i\leq0$ or $j<0$.
For $0\leq j \leq i$, we define $\F_{i,j}$ to be the set of subcubic multigraphs that can be either obtained from a multigraph in $\F_{i-1,j}$ by subdividing an edge once or obtained from a graph in $\F_{i,j-1}$ by taking operation $\circ$.

In particular, for each $i \geq 1$, the unique member in $\F_{i,0}$ is the cycle of length $i$; the unique graph in $\F_{1,1}$ is $K_4$; the unique graph in $\F_{2, 1}$ is the graph obtained from $K_4$ by subdividing an edge, and we denote this graph by $K_4^+$.
Furthermore, there is only one graph in $\F_{3,1}$ with girth at least four, which is the graph obtained from $K_4$ by subdividing each edge once in a perfect matching of $K_4$.

Note that every graph in $\F_{i,j}$ has minimum degree at least two for every $i,j$.
Since the operation $\circ$ decreases the number of vertices of degree two by one, for every $i,j$, each member of $\F_{i,j}$ has exactly $i-j$ vertices of degree two.
Hence every graph in $\F_{i,j}$ is cubic if and only if $i = j$.

There are two special graphs in $\F_{2,2}$ worth mentioning.
One is the cube, denoted by $Q_3$, which is the planar triangle-free cubic graph obtained from two disjoint cycles of length four by adding a perfect matching.
The other is the Wagner graph, denoted by $V_8$, which is the cubic graph obtained from the cycle of length eight by adding edges between pairs of vertices of distance four.  

The following are some properties of graphs in the families $\F_{i,j}$.
They can be easily verified and we omit the proof.

\begin{lemma} \label{basic F}
Let $i,j$ be integers with $i \geq 1$ and $0\leq j \leq i$.
The following holds.
	\begin{enumerate}
		\item $\F_{i,0}$ consists of the cycle of length $i$.
		\item All members of $\F_{i,j}$ are simple, unless $i \in \{1, 2\}$ and $j=0$.
		\item $\F_{1,1}=\{K_4\}$.
		\item $\F_{2,1}=\{K_4^+\}$.
		\item $\F_{3,1}$ consists of three graphs, where each of them is obtained from $K_4$ by either subdividing an edge twice or subdividing two edges once; 
			the graph obtained from $K_4$ by subdividing the edges of a perfect matching is the only graph in $\F_{3,1}$ with girth at least four.
		\item\label{F_{2,2}} Every graph in $\F_{2,2}$ contains disjoint cycles of length less than five; every graph in $\F_{2,2}\backslash\{Q_3,V_8\}$ contains a triangle disjoint from a cycle of length at most four.
		\item Every graph in $\F_{3,2} \cup \F_{4,1}$ has girth at most four.
		\item If $G\in\F_{i,j}$, then $|V(G)| = i+3j$, $|E(G)| = i + 5j$, and $\frac{2\lvert E(G) \rvert -\lvert V(G) \rvert+2}{7} = \frac{i+2}{7}+j$.
	\end{enumerate}
\end{lemma}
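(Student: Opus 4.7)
The plan is to verify the eight items by induction on the $\F_{i,j}$ construction, combined with explicit enumeration for the small-index cases. Every graph in $\F_{i,j}$ is obtained from the single loop in $\F_{1,0}$ by a sequence of subdivisions (each adding one vertex and one edge, passing from index $(i-1,j)$ to $(i,j)$) and $\circ$-operations (each adding three vertices and five edges, passing from $(i,j-1)$ to $(i,j)$). Item (8) is then immediate bookkeeping: the base case $\lvert V(G) \rvert = \lvert E(G) \rvert = 1$ matches $i+3j=1$ and $i+5j=1$, both operations preserve the identities, and $(2\lvert E(G) \rvert-\lvert V(G) \rvert+2)/7 = (i+2)/7 + j$ follows by direct calculation.

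Items (1)--(5) I would handle by tracing the available constructions for the relevant indices. Item (1) is immediate since $\F_{i,0}$ can only be reached by iterated subdivision of the loop. Item (2) follows because, once a graph is simple, neither subdivision nor $\circ$ can introduce a loop or parallel edge (the new vertex added by $\circ$ always has three distinct neighbours). Item (3) reduces to applying $\circ$ to the loop, which forces $e_1=e_2$ to be the loop and directly yields $K_4$. For (4), all edges of $K_4$ lie in a single $\mathrm{Aut}(K_4)$-orbit (so any subdivision gives $K_4^+$), and an explicit check shows that each of the two possible $\circ$-operations on the digon also produces $K_4^+$. For (5), the automorphism group of $K_4^+$ has exactly three edge orbits, yielding the three listed graphs; applying $\circ$ to the triangle in $\F_{3,0}$ produces nothing new, and the one of girth at least four is precisely the one obtained by subdividing the edge of $K_4$ opposite to the already-subdivided edge.

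For item (6), I would first note $\F_{1,2}=\emptyset$: since $\F_{1,1}=\{K_4\}$ and $K_4$ has no degree-two vertex, $\circ$ cannot be applied. Hence every member of $\F_{2,2}$ arises from $K_4^+$ by a $\circ$-operation with $a$ fixed as the unique degree-two vertex. The unordered multi-pairs $\{e_1,e_2\}$ fall into a handful of $\mathrm{Aut}(K_4^+)$-orbits, so the resulting list of graphs is finite and short. A direct enumeration identifies $Q_3$ and $V_8$ as the only triangle-free members; every other member contains a triangle, and in each case one exhibits a triangle disjoint from a cycle of length at most four. In particular, all members of $\F_{2,2}$ possess two disjoint cycles of length less than five.

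Finally, item (7) follows from the observation that the $\circ$-operation forces a cycle of length at most four through the newly added vertex $y$ whenever $e_1=e_2$ (then $yb_1b_2$ is a triangle), or $e_1 \neq e_2$ share an endpoint $v$ (then $yb_1vb_2$ is a $4$-cycle), or $a$ is an endpoint of some $e_i$ (then $yab_i$ is a triangle), or $a$ is adjacent to an endpoint of some $e_i$ (then a $4$-cycle arises). Checking each graph in $\F_{3,1}$ and the $4$-cycle $\F_{4,0}$, one verifies that for every choice of the degree-two vertex $a$ the edges with both endpoints outside the closed neighbourhood of $a$ cannot supply a disjoint pair, so every $\circ$-derived member of $\F_{3,2} \cup \F_{4,1}$ has girth at most four. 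The remaining members of $\F_{3,2}$ arise by subdividing members of $\F_{2,2}$ and retain a short cycle by (6), since subdivision affects at most one of the two disjoint short cycles; the remaining members of $\F_{4,1}$ arise by subdividing $\F_{3,1}$, where a case check shows that at least one cycle of length at most four always survives (triangles extend to $4$-cycles, and in the one triangle-free graph of $\F_{3,1}$ each edge misses two of the four $4$-cycles). The main obstacle across the proof is the enumerative case work for (6), but the finiteness of $\F_{2,2}$ keeps it a purely mechanical check.
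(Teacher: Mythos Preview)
Your proposal is correct. The paper omits the proof of this lemma entirely (stating only that the properties ``can be easily verified''), so your explicit induction for item (8) and case enumeration for items (1)--(7) supply exactly the verification the authors skip; the only slip is that the triangle-free member of $\F_{3,1}$ actually has five $4$-cycles rather than four, but each edge still misses at least two of them, so your conclusion for item (7) is unaffected.
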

Note that Statement 8 of Lemma \ref{basic F} implies that $\F_{i,j} \cap \F_{i',j'} = \emptyset$ if $(i,j) \neq (i',j')$.

The following lemmas were proved in one of our earlier papers \cite{kl}.
They show that if a graph has the property that deleting an arbitrary set of edges with size at most one can make the size of the minimum feedback vertex set small, then so is the graph obtained from it by taking the two operations used for defining the families $\F_{i,j}$.
In particular, they provide a recurrence relation to upper bound the size of minimum feedback vertex sets of graphs in $\F_{i,j}$.

\begin{lemma}[\cite{kl}] \label{lemma:subdivideedgestrong}
Let $H$ be a graph, $k\in \{0,1\}$, and let $G$ be obtained from $H$ by subdividing an edge.  
If there exists $p\in\mathbb R$ such that $\phi(H- W)\leq p$ for every $W\subseteq E(H)$ with $|W| = k$, then $\phi(G- W')\leq p$ for every $W'\subseteq E(G)$ with $|W'| = k$.
\end{lemma}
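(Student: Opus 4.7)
The plan is to show that any feedback vertex set guaranteed by the hypothesis on $H$ transfers without modification to $G$, by exploiting the natural correspondence between cycles of $H$ and cycles of $G$ induced by the subdivision. Let $e = xy$ be the edge of $H$ that is subdivided, and let $v$ be the new vertex in $G$, so that $E(G) = (E(H) \setminus \{e\}) \cup \{xv, vy\}$. The central observation is that every cycle of $G$ either avoids $v$ (in which case it uses only edges of $E(H) \setminus \{e\}$, hence is a cycle of $H$ as well), or contains $v$ together with both incident edges $xv$ and $vy$ (in which case replacing the path $xvy$ by the edge $e$ produces a cycle of $H$ on the vertex set $V(C) \setminus \{v\}$).

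For the case $k = 0$, I would take any feedback vertex set $S$ of $H$ with $|S| \leq p$ and verify directly that $S$ is also a feedback vertex set of $G$. A hypothetical cycle in $G - S$ would, by the dichotomy above, yield a cycle in $H - S$, contradicting the choice of $S$. This handles the base case at no cost; the bound $|S| \leq p$ is preserved because no vertices are added to $S$.

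For the case $k = 1$, let $f \in E(G)$. I would split into two subcases. If $f \in E(H) \setminus \{e\}$, then $G - f$ is obtained from $H - f$ by subdividing $e$, so the $k = 0$ argument applies to the pair $(H - f, G - f)$ using the hypothesis with $W = \{f\}$. If instead $f \in \{xv, vy\}$, say $f = xv$, then $v$ has degree one in $G - f$ and therefore cannot lie on any cycle; hence every cycle of $G - f$ uses only edges in $E(H) \setminus \{e\}$, so any FVS of $H - e$ (which exists by the hypothesis with $W = \{e\}$) is also an FVS of $G - f$.

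There is essentially no obstacle: the argument is a direct case analysis once one notes the cycle correspondence. The one subtle point is the subcase $f \in \{xv, vy\}$, which is precisely why the hypothesis is stated with a universal quantifier over all single-edge deletions of $H$ rather than for $H$ itself, since one needs to invoke it with $W = \{e\}$ to handle deletions of the freshly created edges incident to $v$.
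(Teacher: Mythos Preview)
Your argument is correct. The paper does not supply its own proof of this lemma; it is imported verbatim from \cite{kl}, so there is no in-paper argument to compare against. Your case analysis via the natural cycle correspondence (every cycle of $G$ either avoids $v$ or passes through both edges at $v$ and projects to a cycle of $H$ through $e$) is exactly the expected elementary proof, and your handling of the $k=1$ subcase $f\in\{xv,vy\}$ by invoking the hypothesis with $W=\{e\}$ correctly identifies why the assumption must range over all single-edge deletions of $H$.
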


\begin{lemma}[\cite{kl}] \label{lemma:op1edgestrong}
Let $H$ be a graph, $k\in\{0,1\}$, $a\in V(H), e_1,e_2\in E(H)$, and $G = H\circ (e_1,e_2,a)$.  
If there exists $p\in\mathbb R$ such that $\phi(H- W) \leq p$ for every $W\subseteq E(H)$ with $|W| = k$, then $\phi(G- W')\leq p + 1$ for every $W'\subseteq E(G)$ with $|W'| = k$.
\end{lemma}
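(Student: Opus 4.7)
My approach is, given $W' \subseteq E(G)$ with $|W'| = k$, to choose a corresponding $W \subseteq E(H)$ with $|W| = k$ and a single vertex $v \in \{z, b_1, b_2\}$ (where $z$ is the new vertex adjacent to $a, b_1, b_2$, and $b_1, b_2$ are the subdivision vertices of $e_1, e_2$), apply the hypothesis to obtain a feedback vertex set $T$ of $H - W$ of size at most $p$, and then verify that $T \cup \{v\}$ is a feedback vertex set of $G - W'$ of size at most $p + 1$. The intuition is that $z$ is the source of all new cycles introduced by $\circ$, while subdividing edges or adding leaves preserves cycle existence.

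For $k = 0$ I would take $W = \emptyset$ and $v = z$; then $G - z - T$ is obtained from $H - T$ by subdividing $e_1$ and $e_2$ (with a subdivision vertex possibly becoming a leaf or isolated when an endpoint lies in $T$), so it is a forest iff $H - T$ is. For $k = 1$, writing $W' = \{e'\}$, I would case on the location of $e'$. If $e' \in E(H) \setminus \{e_1, e_2\}$, pair it with $W = \{e'\}$ and take $v = z$; if $e'$ is a subdivision edge of some $e_i$, pair with $W = \{e_i\}$ and take $v = z$. In either case a short check shows that $G - e' - z - T$ reduces, up to subdivisions and leaves, to $H - W - T$, which is a forest.

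The remaining subcase is $e' \in \{za, zb_1, zb_2\}$: here I would pair $e'$ with $W = \{e_j\}$ for a suitable index $j$ and take $v = b_j$. Specifically, for $e' = za$ any $j$ works; for $e' = zb_i$ with $e_1 \neq e_2$ take $j = 3 - i$; when $e_1 = e_2$, take $v$ to be the subdivision vertex on the common path $x - b_1 - b_2 - y$ whose incidence to $z$ is not $e'$. Removing $b_j$ breaks the subdivision path representing $e_j$, and combined with the removal of $e'$ it leaves $z$ with degree at most one, so $G - e' - T - b_j$ becomes, up to leaves, a copy of $H - e_j - T$, which is a forest by the hypothesis with $W = \{e_j\}$.

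The main obstacle is precisely this last subcase. The naive attempt $v = z$ fails there, because $G - e' - z - T = G - z - T$ preserves any cycles of $H - T$ passing through $e_1$ or $e_2$ via the subdivision paths, and the single-edge hypothesis does not preclude such cycles for an arbitrary choice of $W$. Switching to $v = b_j$ effectively enacts the deletion $W = \{e_j\}$ inside $G$, which is exactly what the hypothesis can handle. Minor extra bookkeeping is needed when $e_1 = e_2$, since $b_1, b_2$ then lie on the same subdivision path, but the same selection strategy still produces only leaves outside the forest $H - e_1 - T$.
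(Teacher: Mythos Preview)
The paper does not contain a proof of this lemma; it is quoted from the companion paper \cite{kl} and used as a black box. So there is no in-paper argument to compare your proposal against.

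That said, your proof is correct and is essentially the natural one. The case analysis is complete: for $k=0$ and for $k=1$ with $e'$ lying in $E(H)\setminus\{e_1,e_2\}$ or on a subdivision path, deleting $z$ collapses $G-W'$ (up to leaves and subdivisions) to $H-W$; for $e'\in\{za,zb_1,zb_2\}$ you correctly observe that deleting $z$ alone is insufficient and instead delete a subdivision vertex $b_j$, which simultaneously realizes the edge deletion $e_j$ in $H$ and reduces $z$ to a leaf. Your handling of the degenerate case $e_1=e_2$ is also fine: choosing the subdivision vertex $b_j$ with $zb_j\neq e'$ leaves the other subdivision vertex pendant on an endpoint of $e_1$, so again only leaves are appended to $H-e_1-T$. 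No gap.
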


Now we are ready to describe the function $r$ mentioned in Theorem \ref{strong subcubic}.
Let $H$ be a 2-connected graph or a multigraph on at most two vertices.  
We define
$$\epsilon(H) = \left\{
\begin{array}{l l}
\max\{\frac{5 - i}{7},0\} & \mbox{if } H\in \F_{i,j},i \geq 1, j \geq 0,\\
0 & \mbox{otherwise.}
\end{array}\right.$$
Let $G$ be a connected multigraph that is either simple or a member of $\F_{i,j}$ for some nonnegative integers $i,j$.
We define $r(G)$ to be the sum of $\epsilon(B)$ over all blocks $B$ of $G$ if $G$ is simple; we define $r(G)=\epsilon(G)$ if $G \in \F_{i,j}$ for some nonnegative integers $i,j$.
Note that every simple graph belonging to $\F_{i,j}$ for some $i$ and $j$ is 2-connected, so $r$ is well-defined, and $r(G)=\epsilon(G)$ if $G \in \F_{i,j}$.

\begin{lemma} \label{edge property F}
If $G \in \F_{i,j}$ for some nonnegative integers $i,j$, then $\phi(G-e) \leq \frac{2|E(G)| - |V(G)| + 2}{7} + \epsilon(G) - 1$ for every edge $e \in E(G)$.
\end{lemma}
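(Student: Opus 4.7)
The plan is to induct on $i+j$, exploiting the recursive definition of $\F_{i,j}$ and applying Lemmas~\ref{lemma:subdivideedgestrong} and~\ref{lemma:op1edgestrong} with $k=1$ at each step. Using item~8 of Lemma~\ref{basic F} together with the definition of $\epsilon$, the target bound $\frac{2|E(G)|-|V(G)|+2}{7}+\epsilon(G)-1$ simplifies to
\[
B(i,j) \;:=\; \begin{cases} j & \text{if } 1 \leq i \leq 5,\\ \frac{i+2}{7} + j - 1 & \text{if } i \geq 5,\end{cases}
\]
the two expressions agreeing at $i=5$. A direct check shows that $B$ is non-decreasing in $i$ with $j$ fixed and that $B(i,j) = B(i,j-1)+1$ whenever $j\geq 1$.

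For the base case $(i,j)=(1,0)$, the unique member of $\F_{1,0}$ is the one-vertex multigraph with a loop; deleting the loop leaves an isolated vertex, so $\phi(G-e)=0=B(1,0)$. For the inductive step, suppose $G \in \F_{i,j}$ with $i+j \geq 2$. The recursive construction provides one of two cases: either (a) $G$ is obtained from some $H \in \F_{i-1,j}$ by subdividing an edge, or (b) $G = H \circ (e_1,e_2,a)$ for some $H \in \F_{i,j-1}$ (in which case $j \geq 1$). In case (a), the inductive hypothesis applied to $H$ yields $\phi(H-f) \leq B(i-1,j)$ for every $f \in E(H)$, and Lemma~\ref{lemma:subdivideedgestrong} (with $k=1$ and $p = B(i-1,j)$) gives $\phi(G-e) \leq B(i-1,j) \leq B(i,j)$ for every $e \in E(G)$, the last inequality by monotonicity of $B$ in $i$. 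In case (b), the inductive hypothesis applied to $H$ yields $\phi(H-f) \leq B(i,j-1)$ for every $f \in E(H)$, and Lemma~\ref{lemma:op1edgestrong} (with $k=1$ and $p = B(i,j-1)$) gives $\phi(G-e) \leq B(i,j-1)+1 = B(i,j)$, as required.

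The only delicate point is the one-line numerical verification that $B(i-1,j) \leq B(i,j)$ across the threshold $i=5$ where the piecewise definition of $\epsilon$ switches formulas. Beyond that, Lemmas~\ref{lemma:subdivideedgestrong} and~\ref{lemma:op1edgestrong} are tailored exactly to transport a ``delete one edge'' bound through an edge subdivision and through the operation $\circ$, respectively, so the argument ultimately reduces to careful book-keeping of the error term.
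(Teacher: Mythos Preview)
Your proof is correct and follows essentially the same approach as the paper: induction on $i+j$ with the base case $(1,0)$, then applying Lemma~\ref{lemma:subdivideedgestrong} or Lemma~\ref{lemma:op1edgestrong} with $k=1$ according to whether $G$ arises from $\F_{i-1,j}$ by subdivision or from $\F_{i,j-1}$ by the operation $\circ$. The only cosmetic difference is that you package the target bound as $B(i,j)$ and verify its monotonicity in $i$ explicitly, whereas the paper works directly with the expression $\frac{2|E(H)|-|V(H)|+2}{7}+\epsilon(H)-1$ and checks the same inequality implicitly when passing from $H$ to $G$.
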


\begin{proof}
We shall prove this lemma by induction on $i+j$.
When $i=1,j=0$, $G$ has only one edge $e$, so $\phi(G-e)=0\leq \frac{2|E(G)| - |V(G)| + 2}{7} - 1 + \epsilon(G)$, as desired.
Therefore we may assume that the lemma holds for every pair of $i',j'$ with $i'+j'<i+j$.

We first assume that $G$ is obtained from a multigraph $H$ in $\F_{i-1,j}$ by subdividing an edge.
By the induction hypothesis, $\phi(H-f) \leq \frac{2|E(H)| - |V(H)| + 2}{7} + \epsilon(H) - 1$  for every $f \in E(H)$.
By Lemma \ref{lemma:subdivideedgestrong}, for every $e \in E(G)$, 
$$\phi(G-e) \leq \frac{2|E(H)| - |V(H)| + 2}{7} + \epsilon(H) - 1 \leq \frac{2|E(G)| - |V(G)| + 2}{7} + \epsilon(G) -1,$$
as desired.

So we may assume that $G$ is obtained from a multigraph $H$ in $\F_{i,j-1}$ by taking the operation $\circ$.
By the induction hypothesis, $\phi(H-f) \leq \frac{2|E(H)| - |V(H)| + 2}{7} +\epsilon(H) - 1$ for every $f \in E(H)$.
By Lemma \ref{lemma:op1edgestrong}, for every $e \in E(G)$, 
$$\phi(G-e) \leq \frac{2|E(H)| - |V(H)| + 2}{7} + \epsilon(H) \leq \frac{2|E(G)| - |V(G)| + 2}{7} + \epsilon(G) - 1,$$
as desired.
\end{proof}

\begin{lemma} \label{vertex property F}
If $G \in \F_{i,j}$, then $\phi(G-v) \leq \frac{2|E(G)| - |V(G)| + 2}{7} + \epsilon(G) - 1$ for every vertex $v \in V(G)$.
In particular, for every vertex $v \in V(G)$, there exists a feedback vertex set $S$ of $G$ such that $v \in S$ and $\lvert S \rvert \leq \frac{2|E(G)| - |V(G)| + 2}{7} + \epsilon(G)$.
\end{lemma}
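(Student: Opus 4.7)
The plan is to mirror the inductive structure used for Lemma \ref{edge property F}, inducting on $i+j$ for $G \in \F_{i,j}$. The base case $i=1$, $j=0$ is immediate: $G$ consists of a single vertex with a loop, so $G-v$ is empty, $\phi(G-v) = 0$, and the right-hand side equals $\frac{3}{7} + \frac{4}{7} - 1 = 0$.

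For the inductive step, first suppose $G$ is obtained from $H \in \F_{i-1,j}$ by subdividing an edge $e = xy$ at a new vertex $w$. If $v = w$, then $G - w$ coincides with $H - e$, and the bound follows from Lemma \ref{edge property F} applied to $H$, together with the same arithmetic comparison of parameters (and of $\epsilon(H)$ versus $\epsilon(G)$) already executed in that proof. If $v \in V(H)$, then $G - v$ is either a subdivision of $H - v$ (when $v \notin \{x,y\}$) or $H - v$ with a pendant attached (when $v \in \{x,y\}$); since neither operation alters $\phi$, the bound transfers from $H-v$ by induction.

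Next suppose $G = H \circ (e_1, e_2, a)$ with $H \in \F_{i,j-1}$, where $w_1, w_2, u$ are the new vertices, $u$ being adjacent to $a, w_1, w_2$. The identities $|E(G)| = |E(H)| + 5$, $|V(G)| = |V(H)| + 3$, and $\epsilon(G) = \epsilon(H)$ reduce the target to $\phi(G-v) \leq \frac{2|E(H)| - |V(H)| + 2}{7} + \epsilon(H)$. When $v \in V(H)$, deleting both $v$ and $u$ from $G$ yields a graph whose cycles correspond exactly to those of $H - v$ (via subdivisions of $e_1, e_2$ and pendants at any lost endpoints), so I would take a feedback vertex set $S$ of $H - v$ from the induction hypothesis and adjoin $u$. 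When $v \in \{w_1, w_2\}$, say $v = w_1$, the graph $G - w_1$ is $H - e_1$ with $e_2$ subdivided plus the vertex $u$ attached to $a$ and $w_2$; I would apply Lemma \ref{edge property F} to $H - e_1$ to get a feedback vertex set of the required size and again adjoin $u$.

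The subtle subcase, and the main obstacle I expect, is $v = u$. Then $G - u$ is $H$ with $e_1, e_2$ each subdivided, so $\phi(G-u) = \phi(H)$, and the target becomes the bound $\phi(H) \leq \frac{2|E(H)| - |V(H)| + 2}{7} + \epsilon(H)$ with no $-1$. This does not follow from Lemma \ref{edge property F} (which only controls $\phi(H - e)$); instead I would combine the inductive inequality $\phi(H - v') \leq \frac{2|E(H)| - |V(H)| + 2}{7} + \epsilon(H) - 1$ available for any $v' \in V(H)$ with the trivial bound $\phi(H) \leq \phi(H - v') + 1$. The concluding \emph{in particular} statement is then immediate: given $v$, adjoin $v$ to any feedback vertex set of $G - v$ of size at most $\frac{2|E(G)| - |V(G)| + 2}{7} + \epsilon(G) - 1$.
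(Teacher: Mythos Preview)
Your induction is correct in every case, but it is vastly more work than necessary. The paper dispatches the lemma in two lines: for any $v\in V(G)$, pick an edge $e$ incident with $v$; then $G-v$ is an induced subgraph of $G-e$, so $\phi(G-v)\le\phi(G-e)$, and Lemma~\ref{edge property F} gives the bound directly. The ``in particular'' clause follows exactly as you wrote.

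In other words, the key observation you deploy only in the subcase $v=u$ (namely $\phi(H)\le\phi(H-v')+1$, i.e.\ deleting a vertex never increases $\phi$ by more than one, or equivalently $\phi$ is monotone under vertex deletion) is all that is needed, applied once to $G$ itself rather than buried inside an induction. Your argument is sound, but it reproves Lemma~\ref{edge property F} in parallel rather than simply invoking it; the paper's route buys a one-sentence proof with no case analysis.
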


\begin{proof}
Let $v$ be a vertex of $G$ and let $e$ be an edge of $G$ incident with $v$.
Then $\phi(G-v) \leq \phi(G-e) \leq \frac{2|E(G)| - |V(G)| + 2}{7} + \epsilon(G) - 1$ by Lemma \ref{edge property F}.
\end{proof}

\begin{lemma}\label{girth 5 planar in F}
If $\F_{i,j}$ contains a planar graph with girth at least five, then $i \geq 5$.
\end{lemma}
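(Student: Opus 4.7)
The plan is to combine the vertex/edge count from Statement 8 of Lemma \ref{basic F} with the standard girth-five Euler bound for planar graphs.

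First I would dispose of the case $j=0$. By Statement 1 of Lemma \ref{basic F}, $\F_{i,0}$ consists of the cycle of length $i$ (where the ``cycle of length $1$'' is a single loop and the ``cycle of length $2$'' is a pair of parallel edges). Such a graph has girth $i$, so requiring girth at least five immediately forces $i \geq 5$.

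For the main case $j \geq 1$, let $G \in \F_{i,j}$ be planar with girth at least five. Statement 2 of Lemma \ref{basic F} guarantees that $G$ is simple. I would then note that $G$ is connected and contains a cycle: the base graph $K_4 \in \F_{1,1}$ is connected and contains a cycle, and both defining operations (subdividing an edge and the operation $\circ$) visibly preserve these two properties, so this follows by induction on $j$ (and, within each $j$, on $i$). Hence the standard girth-five planar bound
$$|E(G)| \leq \frac{5}{3}(|V(G)| - 2)$$
applies. Substituting $|V(G)| = i + 3j$ and $|E(G)| = i + 5j$ from Statement 8 of Lemma \ref{basic F}, this becomes
$$i + 5j \leq \frac{5}{3}(i + 3j - 2),$$
which simplifies to $\frac{2i}{3} \geq \frac{10}{3}$, i.e.\ $i \geq 5$.

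There is essentially no hard step here: the whole argument reduces to a one-line computation. The only mildly subtle point is that the multigraph members of $\F_{i,0}$ (loops and parallel edges) prevent a direct application of the simple-graph Euler bound for all values of $j$, which is why I would handle $j=0$ separately via Statement 1.
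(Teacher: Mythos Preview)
Your proof is correct and follows the same route as the paper: apply the girth-five Euler bound $|E(G)| \leq \tfrac{5}{3}(|V(G)|-2)$ and substitute the counts $|V(G)|=i+3j$, $|E(G)|=i+5j$ from Lemma~\ref{basic F}. The paper's version is slightly terser in that it does not split off the case $j=0$: since any graph of girth at least five is automatically loopless and without parallel edges, the simple-graph Euler bound applies uniformly, so your separate treatment of $j=0$, while not wrong, is not needed.
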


\begin{proof}
Let $G$ be a planar graph in $\F_{i,j}$ with girth at least five.  
Since $G$ is not a tree, by Euler's formula, $|E(G)| \leq \frac{5}{3}(|V(G)| - 2)$.  
By Lemma \ref{basic F}, $i + 5j \leq \frac{5}{3}(i + 3j - 2)$.
That is, $i\geq 5$.
\end{proof}

Now we are ready to prove that the function $r$ satisfies the conditions in Theorem \ref{strong subcubic}.

\begin{lemma} \label{r is good}
Let $\W$ be the set of graphs in $\F_{4,2} \cup \F_{4,3}$ with girth at least five.
If $G$ is a connected $\W$-free subcubic graph with no two disjoint cycles with length less than five, then the following hold.
	\begin{enumerate}
		\item $r(G) \leq \frac{4}{7}$. 
		\item If $G \neq K_4$, then $r(G) \leq \frac{3}{7}$; if $r(G) = \frac{3}{7}$, then $G$ contains a triangle.
		\item If $r(G) > 0$ and $G$ is 2-connected, then for all $e\in E(G)$, $\phi(G-e)\leq\frac{2m - n - 5}{7} + r(G)$.
		\item If $G\not \in \F_{3,3} \cup \F_{4,4}$ and has girth at least five, then $r(G)=0$.
	\end{enumerate}
\end{lemma}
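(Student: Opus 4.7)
The plan is to exploit two structural facts about any simple subcubic graph: distinct nontrivial blocks are vertex-disjoint (since a vertex belonging to two such blocks would have degree at least four), and therefore if some nontrivial block $B$ is cubic, then no vertex of $B$ can be a cut-vertex of $G$, forcing $G=B$. Combined with the observation that a block $B$ satisfies $\epsilon(B)>0$ precisely when $B\in\F_{i,j}$ with $i\leq 4$, I would first classify all such blocks of $G$ as either cubic---in which case $B\in\{K_4\}\cup\F_{2,2}\cup\F_{3,3}\cup\F_{4,4}$, using that $\F_{1,j}=\emptyset$ for $j\geq 2$ because $K_4$ has no degree-two vertex for the operation $\circ$---or non-cubic. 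The key technical claim I would prove is that every non-cubic block $B$ with $\epsilon(B)>0$ has girth at most four: the case $j=0$ is immediate because $B$ is a cycle of length $i\leq 4$; the cases $\F_{2,1}$, $\F_{3,1}$, $\F_{4,1}$, and $\F_{3,2}$ are covered by parts (4), (5), and (7) of Lemma~\ref{basic F}; and the remaining possibilities in $\F_{4,2}\cup\F_{4,3}$ of girth at least five are ruled out directly by the $\W$-free hypothesis.

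For Parts~1 and 2, I split on whether $G$ is 2-connected. If $G$ is 2-connected, then $r(G)=\epsilon(G)$; since $\epsilon(G)=4/7$ only when $G\in\F_{1,j}$, and $\F_{1,0}$ is a multigraph while $\F_{1,j}=\emptyset$ for $j\geq 2$, the only $G$ with $r(G)=4/7$ is $K_4$. Similarly $\epsilon(G)=3/7$ forces $G\in\F_{2,j}$; the multigraph $\F_{2,0}$ is excluded, and members of $\F_{2,2}$ contain disjoint short cycles by part~(6) of Lemma~\ref{basic F}, contradicting the hypothesis on $G$, so $G=K_4^+\in\F_{2,1}$, which contains a triangle. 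If $G$ is not 2-connected, then by the observation above no nontrivial block is cubic, so every block with $\epsilon(B)>0$ is non-cubic and therefore contains a short cycle by the classification. Vertex-disjointness of nontrivial blocks together with the no-disjoint-short-cycles hypothesis then forces at most one such block, bounding $r(G)$ by $3/7$, attained only at $K_4^+$, which contains a triangle.

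Part~3 is a direct application of Lemma~\ref{edge property F}: the hypotheses $r(G)>0$ and $G$ 2-connected force $G\in\F_{i,j}$ with $i\leq 4$, and that lemma immediately yields $\phi(G-e)\leq\tfrac{2|E(G)|-|V(G)|+2}{7}+\epsilon(G)-1=\tfrac{2|E(G)|-|V(G)|-5}{7}+r(G)$ for every edge $e$. For Part~4, assuming the hypotheses, I argue by contradiction that $r(G)>0$. If $G$ is 2-connected, the classification forces $G$ to be cubic (otherwise $G$ would have girth at most four, contradicting girth at least five), leaving $G\in\{K_4\}\cup\F_{2,2}\cup\F_{3,3}\cup\F_{4,4}$; but $K_4$ has girth three and members of $\F_{2,2}$ contain short cycles, so $G\in\F_{3,3}\cup\F_{4,4}$, contradicting the hypothesis. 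If $G$ is not 2-connected, then every block $B$ with $\epsilon(B)>0$ is non-cubic and therefore contains a cycle of length at most four, which is also a cycle of $G$, again contradicting girth at least five. The main technical burden throughout is the classification in the first paragraph---verifying for each $(i,j)$ with $i\leq 4$ and $j\leq i$ that non-cubic members of $\F_{i,j}$ have girth at most four or are excluded by $\W$-freeness---but this reduces to finitely many cases, each dispatched by Lemma~\ref{basic F} or by the definition of $\W$.
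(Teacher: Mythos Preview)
Your proposal is correct and follows essentially the same approach as the paper: both split into the 2-connected and non-2-connected cases, both use Lemma~\ref{basic F} to show that any block $B$ with $\epsilon(B)>0$ that is not the whole graph must contain a short cycle (or lie in $\W$, which is excluded), and both then invoke the no-two-disjoint-short-cycles hypothesis to reduce $r(G)$ to a single $\epsilon(B)$. Your organization differs only cosmetically---you front-load the classification of non-cubic blocks with positive $\epsilon$, whereas the paper derives it inline---and your observation that a cubic block forces $G=B$ is exactly the paper's ``$D$ contains at least one vertex of degree at most two'' read contrapositively.
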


\begin{proof}
We first assume that $G$ is 2-connected or has at most two vertices.
If $r(G) > 0$, then $G \in \F_{i,j}$ for some $i,j$ with $1 \leq i \leq 4$.
So $r(G) \leq \frac{5-i}{7} \leq \frac{4}{7}$.
This proves Statement 1.
If $r(G) \geq \frac{3}{7}$ and $G \neq K_4$, then $i=2$, so by Lemma \ref{basic F}, either $G$ contains a triangle or $G \in \{Q_3,V_8\}$.
But each $Q_3$ and $V_8$ contains two disjoint cycles with length less than five, so $G$ contains a triangle.
This proves Statement 2.
Statement 3 is an immediate consequence of Lemma \ref{edge property F}.
If $G \not \in \F_{3,3} \cup \F_{4,4}$ and has girth at least five, then either $G \in \F_{4,2} \cup \F_{4,3}$ or $i \geq 5$ by Lemma \ref{basic F}.
But $G$ is $\W$-free, so $r(G)=0$.
Hence this lemma holds for 2-connected graphs $G$ and graphs with at most two vertices.

Now we assume that $G$ is not 2-connected and $G$ contains at least three vertices.
Since $G$ is subcubic and does not contain two disjoint cycles with length less than five, there exists at most one block containing a cycle of length less than five.
If such a block exists, we denote it by $B$.

Let $D$ be a block of $G$ such that $D$ has girth at least five.
Suppose $\epsilon(D)>0$.
So $D \in \F_{i,j}$ for some $1 \leq i \leq 4,j \leq i$.
Since $G$ is not 2-connected, $D$ contains at least one vertex of degree at most two, so $i-j \geq 1$.
Since $D$ has girth at least five, $i = 4$ and $j \in \{2,3\}$ by Lemma \ref{basic F}, so $\epsilon(D) \leq \frac{1}{7}$ and $D$ isomorphic to a member in $\F_{4,2} \cup \F_{4,3}$.
Since $D$ has girth at least five, $D$ is isomorphic to a member of $\W$.
But $G$ is $\W$-free, a contradiction.
So every block $D$ of $G$ with girth at least five has $\epsilon(D)=0$.
Hence, if $G$ has girth at least five, then $r(G)=0$; otherwise, $B$ exists and $r(G)=\epsilon(B)$.

Since $B$ has at most two vertices or is a 2-connected graph, the lemma holds for $B$, so $r(G) \leq \frac{4}{7}$.
And if $G \neq K_4$, then $B \neq K_4$, so $r(G) \leq \frac{3}{7}$.
Finally, if $r(G)=\frac{3}{7}$, then $r(B)=\frac{3}{7}$, so $B$, and hence $G$, contains a triangle.
\end{proof}

The following will be used in Section \ref{sec:subcubic proof}.

\begin{lemma} \label{-e girth 5}
Let $G$ be a graph isomorphic to a member of $\F_{i,j}$ for some nonnegative integers $i,j$.
If $G$ is planar and there exists an edge $e$ of $G$ such that $G-e$ has girth at least five, then either $i\geq 4$ or $j=0$.
\end{lemma}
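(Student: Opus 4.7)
The plan is to suppose $j \geq 1$ (the case $j = 0$ being the conclusion) and deduce $i \geq 4$ by a short counting argument using Euler's formula, mirroring the proof of Lemma~\ref{girth 5 planar in F}. By Lemma~\ref{basic F}, $G$ is simple with $|V(G)| = i + 3j$ and $|E(G)| = i + 5j$, so $G - e$ is a simple planar graph of girth at least five with $|V(G - e)| = i + 3j$ and $|E(G - e)| = i + 5j - 1$.

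First I would establish that $G - e$ is connected by verifying that every $G \in \F_{i,j}$ with $j \geq 1$ is 2-edge-connected. This goes by induction along the recursive definition of $\F_{i,j}$: the base case $K_4 \in \F_{1,1}$ is 3-edge-connected; subdividing an edge preserves 2-edge-connectivity; and the operation $\circ$ preserves it as well, because for each of the three edges incident to the newly added vertex $w$, its deletion still leaves $w$ connected to the rest of the graph through the other two edges (using 2-edge-connectivity of the underlying graph on which $\circ$ is performed).

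Next I would split into two cases for $G - e$. If $G - e$ is a tree, then $|E(G - e)| = |V(G - e)| - 1$ gives $i + 5j - 1 = i + 3j - 1$, forcing $j = 0$ and contradicting $j \geq 1$. Otherwise $G - e$ contains a cycle, which must have length at least five, so $|V(G - e)| \geq 5$. Applying Euler's formula exactly as in Lemma~\ref{girth 5 planar in F} yields $|E(G - e)| \leq \tfrac{5}{3}(|V(G - e)| - 2)$. Substituting and clearing denominators reduces this to $3i + 15j - 3 \leq 5i + 15j - 10$, i.e.\ $2i \geq 7$, so $i \geq 4$.

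The only nontrivial step is the 2-edge-connectivity claim used to guarantee that $G - e$ is connected; the remainder is a routine arithmetic manipulation essentially identical to the computation in Lemma~\ref{girth 5 planar in F}. Should the inductive verification of 2-edge-connectivity feel distracting, an alternative is to sum Euler's bound over the components of a possibly disconnected $G - e$, since the slack in the inequality $2i \geq 7$ is easily wide enough to absorb the additional constants from multiple components.
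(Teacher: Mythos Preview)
Your proposal is correct and follows essentially the same approach as the paper: assume $j\geq 1$, show $G-e$ is connected and not a tree, then apply the Euler-formula bound $|E(G-e)|\leq\tfrac{5}{3}(|V(G-e)|-2)$ to obtain $2i\geq 7$. The only difference is that the paper bypasses your inductive 2-edge-connectivity argument by invoking the fact, stated just before Lemma~\ref{edge property F}, that every simple member of $\F_{i,j}$ is 2-connected; you can shorten your write-up by citing that instead.
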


\begin{proof}
Since $G \in \F_{i,j}$ and $G$ is simple, $G$ is 2-connected, so $G-e$ is connected.
We may assume that $j>0$.
By Lemma \ref{basic F}, $\lvert E(G-e) \rvert=i+5j-1 \geq i+3j=\lvert V(G) \rvert$, so $G-e$ is not a tree.
Hence every face is incident with all edges of some cycle.
Therefore, every face has length at least five.
By Euler's formula, $|E(G)| -1 \leq \frac{5}{3}(|V(G)| - 2)$.  
By Lemma \ref{basic F}, $i + 5j - 1 \leq \frac{5}{3}(i + 3j - 2)$.
Since $i$ is an integer, $i\geq 4$.
\end{proof}


\section{Proof of Theorem \ref{strong subcubic}}\label{sec:subcubic proof}

In this section, we shall prove Theorem \ref{strong subcubic}.
Let $\S$ be the set of graphs $H$ such that $H \in \F_{3,2} \cup \F_{3,3}$ and there exists an edge $e$ of $H$ such that $H-e$ has girth at least five.
We define $\F$ to the family of graphs consisting of the members of $\F_{4,2} \cup \F_{4,3}$ with girth at least five and the members of $\S$.
Note that every member of $\F$ is non-planar by Lemmas \ref{girth 5 planar in F} and \ref{-e girth 5}.
To prove Theorem \ref{strong subcubic}, by Lemma \ref{r is good}, it is sufficient to show that $\phi(G) \leq \frac{2\lvert E(G) \rvert-\lvert V(G) \rvert+2}{7}+r(G)$ for every connected subcubic graph with no induced subdivision of members of $\F$ and no two disjoint cycles of length less than five.

In the rest of this section, we assume that $G$ is a minimum counterexample.
That is, $G$ is a connected subcubic graph with no induced subdivision of members of $\F$ and no two disjoint cycles of length less than five and $\phi(G) > \frac{2\lvert E(G) \rvert-\lvert V(G) \rvert+2}{7}+r(G)$, but $\phi(H) \leq \frac{2\lvert E(H) \rvert-\lvert V(H) \rvert+2}{7}+r(H)$ for every connected subcubic graph with $\lvert V(H) \rvert + \lvert E(H) \rvert < \lvert V(G) \rvert + \lvert E(G) \rvert$.
We denote $\lvert V(G) \rvert $ by $n$ and denote $\lvert E(G) \rvert$ by $m$.

Clearly, $G$ is not a tree.
So $n \geq 3$.

\begin{claim} \label{subcubic 2-conn}
$G$ is 2-connected.
\end{claim}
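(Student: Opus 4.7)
The plan is to argue by contradiction. Suppose $G$ has a cut-vertex; since $G$ is connected with $n \geq 3$, it then has at least two end-blocks.

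First I would show $G$ has minimum degree at least $2$. If $u$ is a vertex of degree $1$, then $G - u$ is a smaller connected subcubic graph inheriting the two hypotheses of Theorem \ref{strong subcubic}, and $\phi(G) = \phi(G - u)$ since $u$ lies on no cycle. Moreover, the block of $G$ containing $u$ is a $K_2$, so $r(G - u) = r(G)$ because $\epsilon(K_2) = 0$. Minimality of $G$ then yields $\phi(G) \leq \frac{2(m - 1) - (n - 1) + 2}{7} + r(G) = \frac{2m - n - 1}{7} + r(G)$, contradicting that $G$ is a counterexample.

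Now pick an end-block $B$ of $G$ with unique cut-vertex $v$. Since $G$ has no leaf, $B$ is nontrivial and hence 2-connected, so $\deg_B(v) \geq 2$. Because $v$ has at least one neighbor outside $B$ and $G$ is subcubic, $\deg_B(v) = 2$ and $v$ has exactly one neighbor $u \in V(G) \setminus V(B)$. Set $G'' := G - V(B)$. The graph $G - (V(B) \setminus \{v\})$ is connected and $v$ is a leaf in it, so $G''$ is connected; moreover $G''$ is nonempty, for otherwise $G = B$ would be 2-connected. Because $B$ and $G''$ are induced subgraphs of $G$ inheriting both hypotheses of Theorem \ref{strong subcubic}, and both are strictly smaller than $G$, minimality gives
$$\phi(B) \leq \frac{2|E(B)| - |V(B)| + 2}{7} + r(B), \qquad \phi(G'') \leq \frac{2|E(G'')| - |V(G'')| + 2}{7} + r(G'').$$

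Every cycle of $G$ lies within a single block, and the $K_2$ block with vertex set $\{u, v\}$ is acyclic, so the union of a feedback vertex set of $B$ and one of $G''$ is a feedback vertex set of $G$. The blocks of $G$ are $B$, the $K_2$ block on $\{u, v\}$, and the blocks of $G''$, and since $\epsilon(K_2) = 0$ we get $r(G) = r(B) + r(G'')$. Substituting $|V(G'')| = n - |V(B)|$ and $|E(G'')| = m - |E(B)| - 1$ (the $-1$ accounting for the removed cut-edge $uv$) into the sum of the two displayed bounds yields $\phi(G) \leq \frac{2m - n + 2}{7} + r(G)$, contradicting that $G$ is a counterexample. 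The main bookkeeping point in executing this plan is verifying the additive identity $r(G) = r(B) + r(G'')$, which holds precisely because the cut-edge $uv$ forms a $K_2$ block with zero $\epsilon$-contribution; splitting across the cut-edge (rather than across a cut-vertex) is what allows the two $+2$ constants to be absorbed into the single $+2$ needed on the left.
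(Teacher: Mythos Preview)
Your proof is correct and follows essentially the same approach as the paper's: first rule out leaves, then split $G$ at the cut-edge between an end-block $B$ and $G'' = G - V(B)$, apply minimality to both pieces, and use the additivity $r(G) = \epsilon(B) + r(G'')$ to reassemble. Your write-up is in fact a bit more explicit than the paper's about why the $r$-values add correctly (via the $K_2$ block on $\{u,v\}$ having $\epsilon = 0$), but the argument is the same.
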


\begin{proof}
Suppose that $G$ contains a leaf $v$.
Then $\phi(G) \leq \phi(G-v) \leq \frac{2(m-1)-(n-1)+2}{7}+r(G-v)< \frac{2m-n+2}{7}+r(G)$, a contradiction.
So $G$ has minimum degree at least two.

Suppose that $G$ is not 2-connected.
Let $B$ be an end-block of $G$.
Since $n \geq 3$, $B \neq G$.
Since $G$ is subcubic and has minimum degree at least two, $B$ is a component of the graph obtained from $G$ by deleting an edge.

Let $n_B  = |V(B)|$ and $m_B = |E(B)|$.
By the minimality of $G$, $B$ contains a feedback vertex set $S_B$ of size at most $\frac{2m_B - n_B + 2}{7} + \epsilon(B)$, and $G-V(B)$ contains a feedback vertex set $S_B'$ of size at most 
$\frac{2(m - m_B - 1) - (n - n_B) + 2}{7} + r(G-V(B))$. 
Then $S_B\cup S_B'$ is a feedback vertex set of $G$ of size at most $\frac{2m - n + 2}{7} + r(G-V(B)) + \epsilon(B)= \frac{2m - n + 2}{7} + r(G)$,
a contradiction.
This proves the claim.
\end{proof}

\begin{claim} \label{r=0}
$r(G)=0$.
\end{claim}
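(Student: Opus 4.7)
My plan is to prove $r(G)=0$ by contradiction, extracting everything directly from Lemma \ref{vertex property F}. I would begin by supposing $r(G)>0$. Since $G$ is 2-connected by Claim \ref{subcubic 2-conn} and is simple, $G$ is its only block, so the block-sum definition of $r$ collapses to $r(G)=\epsilon(G)$, and thus $\epsilon(G)>0$. By the definition of $\epsilon$, this forces $G$ to be isomorphic to a member of $\F_{i,j}$ for some $i\in\{1,2,3,4\}$ and $0\le j\le i$, with $\epsilon(G)=(5-i)/7$.

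Next I would apply Lemma \ref{vertex property F} to $G$ and any vertex $v\in V(G)$ to produce a feedback vertex set $S$ of $G$ containing $v$ with
$$|S| \;\le\; \frac{2|E(G)|-|V(G)|+2}{7}+\epsilon(G) \;=\; \frac{2m-n+2}{7}+r(G).$$
Hence $\phi(G)\le\frac{2m-n+2}{7}+r(G)$, directly contradicting the assumption that $G$ is a counterexample. The contradiction forces $r(G)=0$.

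The hard work has already been done in Section \ref{sec:r}, where the recursive bounds of Lemmas \ref{lemma:subdivideedgestrong} and \ref{lemma:op1edgestrong} were assembled into Lemma \ref{vertex property F}, which is precisely the tool required here. There is essentially no obstacle in this claim: no case analysis over the finitely many pairs $(i,j)$ or over the particular isomorphism types in $\F_{i,j}$ is needed, because Lemma \ref{vertex property F} is stated uniformly in $i$ and $j$. The only point worth double-checking is that the two clauses in the definition of $r$ agree on $G$, and this holds because the 2-connectedness of $G$ collapses the block sum to the single term $\epsilon(G)$; the hypotheses that $G$ is $\F$-free and contains no two disjoint short cycles play no role in this particular step, although they constrain which $\F_{i,j}$ could have arisen in the first place.
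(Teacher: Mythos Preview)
Your proof is correct and follows essentially the same approach as the paper: use Claim \ref{subcubic 2-conn} to reduce $r(G)$ to $\epsilon(G)$, then invoke Lemma \ref{vertex property F} to contradict the counterexample assumption whenever $G\in\F_{i,j}$ with $i\le 4$. The paper's proof is just a terser packaging of the same two steps.
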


\begin{proof}
By Claim \ref{subcubic 2-conn}, $r(G) = 0$ unless $G \in \F_{i,j}$ for some nonnegative integers $i,j$ with $i \leq 4$.
But $G \not \in \F_{i,j}$ for all nonnegative integers $i,j$ with $i \leq 4$ by Lemma \ref{vertex property F}.
\end{proof}

An {\em edge-cut} of $G$ is an ordered partition $[A,B]$ of $V(G)$.
The {\em order} of $[A,B]$ is the number of edges with one end in $A$ and one end in $B$.

\begin{claim}\label{no splitter with B in F}
If $[A,B]$ is an edge-cut of order at most two, then $G[A],G[B]\notin\F_{i,j}$ for all integers $i,j$ with $1 \leq i \leq 4$ and $0 \leq j \leq i$.
\end{claim}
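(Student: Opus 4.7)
The plan is to assume, for contradiction, that $G[B] \in \F_{i,j}$ for some $1 \leq i \leq 4$ and $0 \leq j \leq i$ (the case $G[A] \in \F_{i,j}$ follows by symmetry). First, I would use Claim \ref{subcubic 2-conn} to conclude that the edge-cut has order exactly $2$, say with edges $e_1 = a_1 b_1$ and $e_2 = a_2 b_2$. Since $G$ is simple and subcubic while every member of $\F_{i,j}$ has minimum degree $2$, it follows that $b_1 \neq b_2$ and each $b_\ell$ has degree exactly $2$ in $G[B]$, forcing $i - j \geq 2$. Combined with the simplicity of $G[B]$, this restricts $(i,j)$ to $\{(3,0), (3,1), (4,0), (4,1), (4,2)\}$. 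For $(i,j) = (4,2)$ with girth at least $5$, I get $G[B] \in \F$ and $G$ contains $G[B]$ as an induced subdivision of a member of $\F$, contradicting $\F$-freeness; so by Lemma \ref{basic F}, I may assume $G[B]$ has a cycle of length less than $5$ in every remaining case.

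Next I would pin down the error term by arguing $r(G[A]) = 0$. Since $G[B]$ contains a short cycle, the no-two-disjoint-short-cycles condition on $G$ forces $G[A]$ to have girth at least $5$. A short $2$-connectedness bookkeeping argument (a connected component of $G[A]$ must carry $0$, $1$, or $2$ crossing edges to $B$; the first two cases contradict $2$-connectedness, while the third forces that component to equal $A$) shows $G[A]$ is connected. Since $a_1$ has degree at most $2$ in $G[A]$ (it loses one incidence to the crossing edge $e_1$), $G[A]$ is not cubic and therefore $G[A] \notin \F_{3,3} \cup \F_{4,4}$. Because $G[A]$ is induced in the $\F$-free graph $G$ and $\W \subseteq \F$, Lemma \ref{r is good}(4) yields $r(G[A]) = 0$.

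To finish, I would combine a feedback vertex set on $B$ containing $b_1$ with one on $A$ obtained from minimality. Lemma \ref{vertex property F} applied to $G[B]$ at $b_1$ gives a feedback vertex set $S_B$ of $G[B]$ with $b_1 \in S_B$ and $|S_B| \leq \frac{2|E(G[B])| - |V(G[B])| + 2}{7} + \epsilon(G[B]) = j + 1$, while minimality of $G$ provides a feedback vertex set $S_A$ of $G[A]$ with $|S_A| \leq \frac{2|E(G[A])| - |V(G[A])| + 2}{7} = \frac{2m - n - i - 7j - 2}{7}$. Every crossing cycle uses both $e_1$ and $e_2$ and hence passes through $b_1 \in S_B$, so $S_A \cup S_B$ is a feedback vertex set of $G$. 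Summing the bounds gives $\phi(G) \leq \frac{2m - n - i + 5}{7}$, and since $i \geq 3$ this is at most $\frac{2m - n + 2}{7}$, contradicting that $G$ is a counterexample with $r(G) = 0$.

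The delicate step will be the $r(G[A]) = 0$ verification, since this slack is precisely what allows $\epsilon(G[B]) = (5-i)/7$ from Lemma \ref{vertex property F} to be absorbed by the saving $(i-3)/7$ secured by $i \geq 3$; the two ingredients, girth escalation in $G[A]$ from the short cycle in $G[B]$ and the degree loss at the anchor $a_1$ excluding the cubic families $\F_{3,3}$ and $\F_{4,4}$, are exactly what Lemma \ref{r is good}(4) needs.
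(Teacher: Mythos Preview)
Your proof is correct and follows essentially the same approach as the paper: obtain $i-j\geq 2$ (hence $i\geq 3$) from the two degree-two anchors in $G[B]$, pull a feedback vertex set of $G[B]$ containing a crossing endpoint from Lemma~\ref{vertex property F}, combine it with one for $G[A]$ from minimality, and reach a contradiction once $r(G[A])=0$. The only organisational differences are that the paper treats $u_A=v_A$ as a separate easy case and argues $r(G[A])=0$ by an inline block analysis (finding a block $D\in\F_{s,t}$ of $G[A]$ with $s-t\geq 2$, hence girth at most four, contradicting the short cycle already placed in $G[B]$), whereas you route this through Lemma~\ref{r is good}(4) after noting $G[A]$ is not cubic; these are equivalent.
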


\begin{proof}
Suppose to the contrary.
By symmetry, we may suppose that $G[B] \in \F_{i,j}$ for some $i,j$ with $1 \leq i \leq 4$ and $0 \leq j \leq i$.
Since $G$ has no loops or parallel edges, $\lvert B \rvert \geq 2$.
Since $G$ is 2-connected, $[A,B]$ has order two, so $G[B]$ contains at least two vertices of degree at most two.
So $i \geq 3$ and $i-j \geq 2$.
Let $u_A,v_A$ be the ends of the edges between $A,B$ in $A$, and let $u_B$ and $v_B$ be the ends of the edges between $A,B$ in $B$.
Note that $u_B \neq v_B$ as $G$ is 2-connected.
Let $n_A = \lvert A \rvert$, $n_B=\lvert B \rvert$, $m_A = \lvert E(G[A]) \rvert$ and $m_B = \lvert E(G[B]) \rvert$.
By Lemma \ref{vertex property F}, $G[B]$ has a feedback vertex set $S_B$ with size at most $\frac{2m_B-n_B+2}{7}+r(G[B])=\frac{2m_B-n_B+2+(5-i)}{7}$ such that $u_B \in S_B$.

We first assume that $u_A=v_A$.
Since $G$ is 2-connected, $n_A=1$.
Note that $S_B$ is a feedback vertex set of $G$ as $u_B \in S_B$.
Since $i \geq 3$, $S_B$ is a feedback vertex set of $G$ with size at most $\frac{2m-n+2}{7}$, a contradiction.
So $u_A \neq v_A$.

By the minimality of $G$, $G[A]$ admit s feedback vertex set $S_A$ of size at most $\frac{2m_A - n_A + 2}{7} + r(G[A])$.
Since $u_B \in S_B$, $S_A\cup S_B$ is a feedback vertex set of $G$ of size at most 
$$\frac{2m_A - n_A + 2}{7}+r(G[A]) + \frac{2m_B - n_B + 7-i}{7}= \frac{2(m_A + m_B) - n + 9-i}{7} + r(G[A])$$
$$=\frac{2(m - 2) - n + 9-i}{7} + r(G[A]) = \frac{2m - n+5-i}{7} + r(G[A]),$$
so $r(G[A])+ \frac{3-i}{7}>r(G)=0$. 
Since $i \geq 3$, $r(G[A]) >0$.

Since $i-j \geq 2$ and $G$ is $\F_{4,2}$-free, we have $3 \leq i \leq 4$ and $0 \leq j \leq 1$.
By Lemmas \ref{basic F}, $G[B]$ has girth less than five, so $G[A]$ has girth at least five.

Since $r(G[A])>0$, $G[A]$ contains a block $D \in \F_{s,t}$ for some integers $s,t$ with $s \leq 4$.
Since $G$ is 2-connected, $D$ contains at least two vertices of degree at most two, so $s-t \geq 2$.
Hence $3 \leq s \leq 4$ and $0 \leq t \leq 1$.
By Lemma \ref{basic F}, $G[A]$ contains a cycle of length less than five, a contradiction.
This proves the claim.
\end{proof}

\begin{claim} \label{one side is not cycle}
If $[A,B]$ is an edge-cut of $G$ of order two, then $G[A]$ is not a cycle.
\end{claim}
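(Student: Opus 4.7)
Suppose for contradiction that $G[A]$ is a cycle of length $i=|A|$. Since $G[A]=C_i\in\F_{i,0}$, Claim~\ref{no splitter with B in F} forces $i\ge 5$. My plan is to construct a feedback vertex set of $G$ of size at most $\frac{2m-n+2}{7}$, contradicting the counterexample hypothesis.

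Because $G[B]$ is connected and subcubic, inherits from $G$ the absence of two disjoint short cycles and of induced subdivisions of members of $\F$, and has strictly smaller $|V|+|E|$ than $G$, the minimality of $G$ supplies a feedback vertex set $S_B$ of $G[B]$ with
$$|S_B|\le\frac{2(m-i-2)-(n-i)+2}{7}+r(G[B])=\frac{2m-n-i-2}{7}+r(G[B]).$$
I take $S=S_B\cup\{u_A\}$; removing $u_A$ opens the cycle $G[A]$ into a path that attaches to the forest $G[B]-S_B$ by at most the single edge $v_Av_B$, so $G-S$ is acyclic and $S$ is a feedback vertex set of $G$. Thus $\phi(G)\le\frac{2m-n-i+5}{7}+r(G[B])$. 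Claim~\ref{no splitter with B in F} rules out $G[B]=K_4$, so Lemma~\ref{r is good}(2) yields $r(G[B])\le\frac{3}{7}$; for $i\ge 6$ this already gives $\phi(G)\le\frac{2m-n+2}{7}$, the desired contradiction.

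It remains to exclude $i=5$, where the count demands the sharper $r(G[B])\le\frac{2}{7}$; thus I must rule out $r(G[B])=\frac{3}{7}$. Because nontrivial blocks of a subcubic graph are pairwise vertex-disjoint, and every block of $G[B]$ with $\epsilon>0$ contains a cycle of length at most four (by Lemma~\ref{basic F}, using $\W$-freeness of $G$ to exclude the girth-five members of $\F_{4,2}\cup\F_{4,3}$ and using that a cubic graph in $\F_{4,4}$ has no degree-two vertex to attach via, so it can appear only as the whole of $G[B]$, which Claim~\ref{no splitter with B in F} forbids), two blocks with $\epsilon>0$ would produce two disjoint short cycles in $G[B]$, a contradiction. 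Hence a unique nontrivial block $B^\star$ contributes to $r(G[B])$, and $\epsilon(B^\star)=\frac{3}{7}$ forces $B^\star\in\F_{2,j'}$. Simplicity excludes $j'=0$, and Lemma~\ref{basic F}(6) excludes $j'=2$ because every member of $\F_{2,2}$ contains two disjoint short cycles; so $B^\star=K_4^+$. Claim~\ref{no splitter with B in F} also prevents $G[B]=K_4^+$, so $K_4^+$ is a proper block, attached to the rest of $G[B]$ through a unique cut vertex $c$. Subcubicity forces $c$ to be the sole degree-two vertex of $K_4^+$ (any other choice would push its $G[B]$-degree to at least four), and consequently $u_B,v_B\notin V(K_4^+)$.

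The closing step is structural: using this rigid picture---a $5$-cycle $G[A]$ attached via two cut edges to a $G[B]$ that has a proper $K_4^+$ block linked through its subdivision vertex, with $u_B,v_B$ reaching that block via paths in $G[B]\setminus V(K_4^+)$---I will exhibit inside $G$ either two vertex-disjoint cycles of length less than five or an induced subdivision of a concrete member of $\F$ (most plausibly in $\S\subseteq\F_{3,2}\cup\F_{3,3}$, or in the girth-five part of $\F_{4,2}\cup\F_{4,3}$) assembled from $G[A]$, the two cut edges, the $K_4^+$ block, and a $u_B$-to-$v_B$ detour in $G[B]\setminus V(K_4^+)$; either outcome contradicts the assumptions on $G$. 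The main obstacle is exactly this last step: the reduction to ``proper $K_4^+$ block attached at its degree-two vertex'' is mechanical, whereas identifying the particular member of $\F$ whose induced subdivision is realised by the resulting configuration is the delicate heart of the argument.
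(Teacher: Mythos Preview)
Your reduction is sound up to the point where you isolate a proper $K_4^+$ block of $G[B]$ attached at its unique degree-two vertex $c$, with $u_B,v_B\notin V(K_4^+)$. But then you stop and propose hunting for an induced subdivision of some member of $\F$; this is both unnecessary and the wrong direction. Look at the cut $[V(K_4^+),V(G)\setminus V(K_4^+)]$: the four degree-three vertices of $K_4^+$ have all their edges inside the block, and $c$ has at most one edge leaving it (to the rest of $G[B]$); since $u_B,v_B\notin V(K_4^+)$ there are no edges to $A$. Hence this cut has order at most one, contradicting the $2$-connectivity of $G$ established in Claim~\ref{subcubic 2-conn}. So your ``delicate heart of the argument'' is in fact a one-line observation you overlooked.

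More importantly, you are working much harder than the paper. The paper dispatches the whole claim by asserting $r(G[B])=0$ directly from Claim~\ref{no splitter with B in F}. The point is that for \emph{every} nontrivial block $D$ of $G[B]$, the cut $[V(D),V(G)\setminus V(D)]$ has order exactly two: since $G$ is $2$-connected and only two edges leave $B$, each end-block of $G[B]$ must contain one of $u_B,v_B$ as a non-cut-vertex, so the block--cut tree of $G[B]$ is a path; every nontrivial block then has at most two ``exit'' vertices (cut-vertices of $G[B]$ or $u_B,v_B$), each of degree two in $D$ and hence contributing at most one outgoing edge. Claim~\ref{no splitter with B in F} therefore applies to each block and gives $\epsilon(D)=0$, whence $r(G[B])=0$. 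With $|A|\ge 5$ (also from Claim~\ref{no splitter with B in F}) your inequality then reads $\phi(G)\le\frac{2m-n}{7}$, and you are done without any case split on $i$, without Lemma~\ref{r is good}, and without the $K_4^+$ analysis.
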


\begin{proof}
Suppose that $G[A]$ is a cycle.
Let $v$ be a vertex in $A$ adjacent to some vertex in $B$.
Then $G$ has a feedback vertex set containing $\{v\}$ with size $\phi(G[B])+1 \leq \frac{2(m-\lvert A \rvert-2)-(n-\lvert A \rvert)+2}{7}+r(G[B])+1 = \frac{2m-n-\lvert A \rvert+5}{7}+r(G[B])$.
By Claim \ref{no splitter with B in F}, $r(G[B])=0$ and $\lvert A \rvert \geq 5$.
Hence $\phi(G) \leq \frac{2m-n}{7}$, a contradiction.
\end{proof}

We say an edge-cut $[A,B]$ is a {\em splitter} if it has order at most two with $\lvert A \rvert \geq 2$ such that $G[A]$ has girth at least five and $G[B]$ is 2-connected.
Note that $A,B$ have size at least two, so $[A,B]$ has order two and the edges of $[A,B]$ do not share ends, since $G$ is 2-connected.
A splitter is {\em tight} if there exists no splitter $[A',B']$ such that $B' \subset B$.

\begin{claim}\label{splitter 1}
If $[A,B]$ is a tight splitter of $G$, then either
	\begin{enumerate}
		\item $G[A]$ is a path,	
		\item $\phi(G[A]) \leq \frac{2|E(A)| - |V(A)| - 2}{7}$.
	\end{enumerate}
\end{claim}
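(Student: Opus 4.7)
The plan is to bound $\phi(G[A])$ by applying the minimality of $G$ to the graph $H$ obtained from $G[A]$ by attaching a tiny gadget that serves as a stand-in for the connection through $B$. Let $u_A u_B$ and $v_A v_B$ denote the two cut edges, with $u_A, v_A \in A$ and $u_B, v_B \in B$. I will take $H = G[A] + \{u_A y, y v_A\}$, where $y$ is a new vertex playing the role of a shortcut through $B$.

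First I will verify that $H$ satisfies the hypotheses of a graph strictly smaller than $G$. Connectedness is immediate; $H$ is subcubic since $u_A, v_A$ each have degree at most two in $G[A]$; and $H$ is strictly smaller than $G$ because $|V(B)| \geq 3$ (as $G[B]$ is 2-connected). Provided $u_A$ and $v_A$ are at distance at least three in $G[A]$, every cycle of $H$ through $y$ has length at least five, so $H$ has girth at least five and a fortiori no two disjoint cycles of length less than five. For $\F$-freeness: any induced subdivision of some $F \in \F$ in $H$ either avoids $y$ (and then lies in $G[A] \subseteq G$, contradicting the $\F$-freeness of $G$) or uses $y$ as an internal subdivision vertex, in which case I substitute the two-edge path $u_A\text{-}y\text{-}v_A$ by the two cut edges together with an induced $u_B\text{-}v_B$ path in $G[B]$ (available by 2-connectedness), producing an induced subdivision of $F$ in $G$, again a contradiction. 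Finally, since $y$ has degree two, $H$ is not cubic, so Lemma \ref{r is good} forces $r(H) = 0$. Applying the minimality of $G$ then yields $\phi(H) \leq \frac{2|E(A)| - |V(A)| + 5}{7}$.

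The core step is proving $\phi(G[A]) \leq \phi(H) - 1$, which combined with the above bound gives $\phi(G[A]) \leq \frac{2|E(A)| - |V(A)| - 2}{7}$, the desired conclusion. This inequality holds whenever some minimum FVS of $H$ contains $y$, because deleting $y$ from such a set leaves a FVS of $G[A]$ of size one smaller. Otherwise every minimum FVS of $H$ lies in $V(G[A])$ and, in order to destroy every cycle through $y$, either contains $u_A$ or $v_A$ or separates them in $G[A]$. I plan to use the tightness of $[A, B]$ to rule out this alternative when $G[A]$ is not a path: such a $(u_A, v_A)$-separator in $G[A]$, combined with the 2-connectedness of $G[B]$ and the structural restrictions provided by Claims \ref{no splitter with B in F} and \ref{one side is not cycle}, should let me build a splitter $[A', B']$ with $B' \subsetneq B$, contradicting tightness.

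Finally, I need to treat the degenerate cases where $u_A$ and $v_A$ are within distance two in $G[A]$, so the naive gadget would create a short cycle. In each such configuration (either $u_A v_A \in E(G[A])$ or $u_A, v_A$ share a common neighbor in $G[A]$) I plan a small case analysis: either insert a slightly longer gadget, tracking the arithmetic via Lemmas \ref{lemma:subdivideedgestrong} and \ref{lemma:op1edgestrong}, or invoke tightness directly to force $G[A]$ to be a path. The main obstacle I expect is the structural ``no separator'' argument that ties tightness of $[A, B]$ to the nonexistence of a small $(u_A, v_A)$-separator in $G[A]$, since this is what converts the inductive bound on $\phi(H)$ into the strict saving of one vertex needed for the claim; once that is in hand, the arithmetic in both the generic and degenerate cases closes out the proof.
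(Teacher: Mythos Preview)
Your argument has a genuine gap at the key step $\phi(G[A]) \leq \phi(H)-1$. You correctly note that this holds if some minimum feedback vertex set of $H$ contains $y$, and that otherwise every minimum feedback vertex set $S$ of $H$ lies in $A$ and leaves $u_A,v_A$ in different components of $G[A]-S$ (or contains one of them). But your plan to ``rule out this alternative'' via tightness does not work: the existence of such an $S$ is a statement about feedback vertex sets, not about edge-cuts, and it gives no way to manufacture a splitter $[A',B']$ with $B'\subsetneq B$ and $G[B']$ $2$-connected. Indeed, in the bad case $S$ is also a minimum feedback vertex set of $G[A]$, and combining it with a minimum feedback vertex set $S_B$ of $G[B]$ (which has $\epsilon(G[B])=0$ by Claim~\ref{no splitter with B in F}) yields a feedback vertex set of $G$ of size at most $\frac{2m_A-n_A+5}{7}+\frac{2m_B-n_B+2}{7}=\frac{2m-n+3}{7}$, which misses the target by exactly $\frac{1}{7}$. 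That $\frac{1}{7}$ is precisely the cost of your extra gadget vertex $y$.

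The paper's proof avoids this by \emph{not} killing the error term. In the main case ($u_A v_A\notin E(G)$) it sets $A'=G[A]+u_Av_A$ rather than attaching a new vertex; then $A'$ is $2$-connected and a feedback vertex set of $A'$ automatically separates $u_A$ from $v_A$ in $G[A]$, so $S_{A'}\cup S_B$ is a feedback vertex set of $G$ of size at most $\frac{2m-n+2}{7}+\epsilon(A')$. The counterexample hypothesis now forces $\epsilon(A')>0$, i.e.\ $A'\in\F_{i,j}$ with $i\le 4$; the girth of $G[A]$ and the exclusion of induced subdivisions of members of $\S$ then pin down $j=0$ (so $G[A]$ is a path) or $i=4$, and in the latter case Lemma~\ref{edge property F} applied to the edge $u_Av_A$ gives $\phi(G[A])\le\frac{2m_A-n_A-2}{7}$ directly. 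In short, the paper \emph{uses} the positivity of the error term to extract structural information, whereas your gadget suppresses the error term and then lacks a mechanism to recover the missing unit. The case $u_Av_A\in E(G)$ is handled in the paper by a separate (and somewhat delicate) reduction that you also have not supplied.
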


\begin{proof}
Let $[A, B]$ be a tight splitter.  
Let $u_A,v_A$ be the ends of the edges between $A,B$ in $A$, and let $u_B$ and $v_B$ be the ends of the edges between $A,B$ in $B$.
Let $n_A = \lvert A \rvert$, $n_B=\lvert B \rvert$, $m_A = \lvert E(G[A]) \rvert$ and $m_B = \lvert E(G[B]) \rvert$.
By the minimality of $G$, $G[A]$ and $G[B]$ admit feedback vertex sets $S_A$ and $S_B$ of size at most $\frac{2m_A - n_A + 2}{7} + r(G[A])$ and $\frac{2m_B - n_B + 2}{7} + \epsilon(G[B])$, respectively.  
By Claim \ref{no splitter with B in F}, $\epsilon(G[B]) = 0$.

First assume that $u_A$ is not adjacent to $v_A$.  
Let $A' = G[A]+u_Av_A$.  
Note that $A'$ is 2-connected and has no disjoint cycles of length less than five.  
Since $B$ is connected, $A'$ does not contain any induced subdivision of members of $\F$.
By the minimality of $G$, $A'$ admits a feedback vertex set $S_{A'}$ of size at most $\frac{2(m_A + 1) - n_A + 2}{7} + \epsilon(A') = \frac{2m_A - n_A + 4}{7} + \epsilon(A')$.  Then $S_{A'}\cup S_B$ is a feedback vertex set of $G$ of size at most 
$$\frac{2(m - 2) - n + 6}{7} + \epsilon(A') = \frac{2m - n + 2}{7} + \epsilon(A').$$ 
Therefore $\epsilon(A') > 0$, so $A'\in\F_{i,j}$ for some $i \leq 4$.  

Since $A'-u_Av_A$ equals $G[A]$ and has girth at least five, $i \geq 3$ and $(i,j) \neq (3,1)$ by Lemma \ref{basic F}.
Furthermore, $A' \not \in \F_{3,2} \cup \F_{3,3}$, otherwise $A' \in {\mathcal S} \subseteq \F$ and $G$ contains $A'$ as an induced subdivision. 
Hence, either $i= 4$ or $j=0$.  
If $j=0$, then $G[A]$ is a path, as desired.  
If $i= 4$, then $\epsilon(A') \leq \frac{1}{7}$.  
By Lemma \ref{edge property F}, $\phi(G[A]) \leq \frac{2(m_A+1) - n_A + 2}{7} + \frac{1}{7} - 1 \leq \frac{2m_A - n_A - 2}{7}$, as desired.

Therefore we may assume $u_A$ is adjacent to $v_A$.  
If $u_A$ or $v_A$ has degree two in $G$, then $G[A]$ is isomorphic to $K_2$, as desired.  
So we may assume $u_A$ and $v_A$ have degree three in $G$.  
Let $u'_A$ be the neighbor of $u_A$ in $A$ other than $v_A$, and let $v_A'$ be the neighbor of $v_A$ in $A$ other than $u_A$.  
Suppose $u'_A$ has degree two in $G$.  Let $A'' = G[A-\{u'_A,u_A,v_A\}]$.  By the minimality of $G$, $A''$ admits a feedback vertex set $S_{A''}$ of size at most 
$$\frac{2(m_A - 4) - (n_A - 3) + 2}{7} + r(A'') = \frac{2m_A - n_A - 3}{7} + r(A'').$$  
Since $A''$ has girth at least five and $G$ is 2-connected, if $r(A'')>0$, then $A''$ contains a block with girth at least five belonging to $\F_{i',j'}$ for some $i',j'$ with $i' \leq 4$ and $j'$ with $i'-j' \geq 2$, which implies that $(i',j')=(4,2)$ by Lemma \ref{basic F}, a contradiction.  
So $r(A'')=0$.
Then $S_{A''}\cup S_B\cup\{u_A\}$ is a feedback vertex set of $G$ of size at most
$\frac{2(m - 2) - n - 3 + 2}{7} + 1 = \frac{2m - n + 2}{7}$, a contradiction.  

Therefore we may assume $u'_A$ has degree three in $G$.  
Similarly, we may assume that $v_A'$ has degree three in $G$.
Let $x$ and $y$ be the neighbors of $u'_A$ distinct from $u_A$.  
Note that $x$ and $y$ are non-adjacent vertices distinct from $v_A$ since $G[A]$ has girth at least five.  
Let $A''' = G[A\backslash\{u'_A,u_A,v_A\}]+xy$.  
By the minimality of $G$, $A'''$ admits a feedback vertex set $S_{A'''}$ of size at most
$$\frac{2(m_A - 4) - (n_A - 3) + 2}{7} + r(A''') = \frac{2m_A - n_A - 3}{7} + r(A''').$$
Then $S_{A'''}\cup S_B\cup\{u_A\}$ is a feedback vertex set of $G$ of size at most $\frac{2m - n + 2}{7} + r(A''')$.  
Hence, $r(A''')>0$.  
So $A'''$ contains a block $D$ belonging to $\F_{s,t}$ for some $s \leq 4$.

If $D$ does not contain $xy$, then $D$ has girth at least five and contains at least two vertices of degree at most two, so $D \in \F_{4,2}$ and $D$ is an induced subgraph of $G$, a contradiction.
So $D$ contains $xy$, and it is the unique block of $A'''$ with $\epsilon>0$.
Hence $r(A''') = \epsilon(D)$.

Note that $D-xy$ has girth at least five, so $s \geq 3$ and $(s,t) \neq (3,1)$.
Note that $D \not \in \F_{3,2} \cup \F_{3,3}$, otherwise $G$ contains an induced subdivision of a member of $\F$.
Furthermore, $D \not \in \F_{3,0}$, otherwise $G[V(D) \cup \{u_A'\}]$ is a 4-cycle in $G[A]$, a contradiction.
So $s=4$ and $r(A''')=\frac{1}{7}$.

If $D=A'''$, then $\phi(A'''-xy) \leq\frac{2(m_A-4) - (n_A-3) + 2}{7} + r(A''') - 1$ by Lemma \ref{edge property F}; otherwise, since $G$ is 2-connected and $v_A'$ has degree at least three in $G$, $D$ is an end-block of $A'''$ and $A'''-V(D)$ has no leaves, so $\phi(A'''-xy) = \phi(D-xy)+ \phi(A'''-V(D)) \leq \frac{2\lvert E(D) \rvert - \lvert V(D) \rvert+2}{7}+\epsilon(D)-1+\frac{2(\lvert E(A''') \rvert - \lvert E(D) \rvert-1)+(\lvert V(A''') \rvert-\lvert V(D) \rvert)+2}{7}+r(A'''-D) = \frac{2(m_A-4)-2-(n_A-3)+4}{7}+\frac{1}{7}-1$.
In either case, $\phi(A'''-xy) \leq \frac{2(m_A-4) - (n_A-3) + 2}{7} + \frac{1}{7} - 1 = \frac{2m_A-n_A-9}{7}$.

Note that $A'''-xy=G[A-\{u_A,v_A,u_A'\}]$, so by adding $u_A'$ into a minimum feedback vertex set of $A'''-xy$, $G[A]$ has a feedback vertex set with size $\phi(A'''-xy)+1 \leq \frac{2m_A-n_A-2}{7}$, as desired.
This proves the claim.
\end{proof}

\begin{claim} \label{splitter 2}
If $[A,B]$ is a tight splitter of $G$, then $G[A]$ is a path.
\end{claim}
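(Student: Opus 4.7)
I will argue by contradiction: assume $G[A]$ is not a path.  By Claim~\ref{splitter 1}, $G[A]$ admits a feedback vertex set $S_A$ of size at most $\frac{2m_A-n_A-2}{7}$, where $m_A=|E(G[A])|$ and $n_A=|V(G[A])|$.  By Claim~\ref{no splitter with B in F}, $\epsilon(G[B])=0$, so the minimality of $G$ provides a feedback vertex set $S_B$ of $G[B]$ of size at most $\frac{2m_B-n_B+2}{7}$.  The set $S_A\cup S_B$ can fail to be a feedback vertex set of $G$ only through a cycle using both cut edges $u_Au_B$ and $v_Av_B$, and such a cycle requires both a $u_A$--$v_A$ path in $G[A]-S_A$ and a $u_B$--$v_B$ path in $G[B]-S_B$.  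If either path is absent, then $\phi(G)\leq\frac{2m-n-4}{7}<\frac{2m-n+2}{7}+r(G)$, contradicting $G$ being a counterexample.

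In the remaining case I pass to the induced subgraph $G^{*}=G[V(B)\cup\{u_A,v_A\}]$.  Since $|V(A)|\geq 3$ (as $G[A]$ is a connected non-path), $G^{*}$ is strictly smaller than $G$, and it inherits subcubicity, connectedness, $\F$-freeness for induced subdivisions, and the absence of two disjoint short cycles from $G$.  If $u_Av_A\in E(G)$, then $G^{*}$ contains the cycle $u_Av_Av_B\cdots u_Bu_A$ closed through any $u_B$--$v_B$ path in the 2-connected $G[B]$, so every feedback vertex set $S^{*}$ of $G^{*}$ must either contain one of $u_A,v_A$ or block every $u_B$--$v_B$ path in $G[B]$; in either subcase, $S^{*}\cup S_A$ is a feedback vertex set of $G$.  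Combining $\phi(G^{*})\leq\frac{2m_B-n_B+4}{7}+r(G^{*})$ from the induction with $r(G^{*})\leq\frac{4}{7}$ from Lemma~\ref{r is good} and $\phi(G[A])\leq\frac{2m_A-n_A-2}{7}$ yields $\phi(G)\leq\frac{2m-n+2}{7}$, a contradiction.

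The subtle subcase is $u_Av_A\notin E(G)$: now $u_A,v_A$ are pendants of $G^{*}$ and the cycle used above collapses, so the argument breaks down by exactly $\frac{1}{7}$.  The symmetric move is to augment $G[B]$ to $B^{+}=G[B]+u_Bv_B$ (when $u_Bv_B\notin E(G[B])$).  Every feedback vertex set of $B^{+}$ blocks each $u_B$--$v_B$ path in $G[B]$, so together with $S_A$ it is a feedback vertex set of $G$ of size at most $\frac{2m-n-2}{7}+r(B^{+})\leq\frac{2m-n+2}{7}$, again contradicting the choice of $G$.  \textbf{The main obstacle} is to verify that $B^{+}$ is a valid instance of the induction hypothesis; the nontrivial requirement is the absence of two disjoint cycles of length less than five in $B^{+}$, which can fail because a new short cycle in $B^{+}$ through $u_Bv_B$ corresponds in $G$ to a cycle of length at least five going through $A$, and therefore does not conflict with the ``no two disjoint short cycles'' hypothesis on $G$.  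To close this case, I would invoke the tightness of $[A,B]$: any pair of disjoint short cycles that appears in $B^{+}$ forces a two-vertex structural configuration in $G[B]$ that can be used to locate a splitter $[A^{*},B^{*}]$ of $G$ with $B^{*}\subsetneq B$, violating tightness.  The remaining case $u_Bv_B\in E(G[B])$ is handled by a parallel construction on $G[B]$ obtained by deleting $u_Bv_B$ and adjusting the accounting accordingly.
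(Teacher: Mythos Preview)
Your opening move is the same as the paper's—assume $G[A]$ is not a path and invoke Claim~\ref{splitter 1} to get $|S_A|\le\frac{2m_A-n_A-2}{7}$—but from there the approaches diverge, and yours runs into a real gap.

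The paper avoids every one of your case splits by a single clean trick: instead of working with $G[B]$, $G^{*}$, or $B^{+}$, it deletes the vertex $u_B$ from $G[B]$ and sets $H=G[B]-u_B$. Since $G[B]$ is $2$-connected, $u_B$ has degree exactly two in $G[B]$, so $H$ is connected and small enough for induction. The point is that $S_A\cup S_H\cup\{u_B\}$ is \emph{automatically} a feedback vertex set of $G$: every cycle through the cut contains $u_B$. This gives $\phi(G)\le\frac{2m-n}{7}+r(H)$, hence $r(H)>\frac{2}{7}$, and a short block analysis of $H$ (using that $H\subseteq G$, so no new short cycles or forbidden subdivisions can appear) forces $D\in\F_{2,1}=\{K_4^{+}\}$, which is impossible because every block of $H$ has at least two vertices of degree two. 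No edge is ever added, so the ``two disjoint short cycles'' hypothesis is inherited for free.

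Your route, by contrast, requires adding the edge $u_Bv_B$ to form $B^{+}$, and this is exactly where it breaks. You correctly identify the obstacle—$B^{+}$ may acquire two disjoint short cycles even though $G$ has none—but you do not resolve it. The sentence ``any pair of disjoint short cycles that appears in $B^{+}$ forces a two-vertex structural configuration in $G[B]$ that can be used to locate a splitter $[A^{*},B^{*}]$ with $B^{*}\subsetneq B$'' is an assertion, not an argument: given a short $u_B$--$v_B$ path $C_1-u_Bv_B$ and a disjoint short cycle $C_2\subseteq G[B]$, there is no evident $2$-edge-cut isolating a $2$-connected piece of $B$ on one side, so tightness gives you nothing to contradict. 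The case $u_Bv_B\in E(G[B])$ is likewise only gestured at. Finally, in your $u_Av_A\in E(G)$ branch the arithmetic is off: $|E(G^{*})|=m_B+3$ and $|V(G^{*})|=n_B+2$, so induction gives $\phi(G^{*})\le\frac{2m_B-n_B+6}{7}+r(G^{*})$, and combining with $|S_A|$ yields only $\phi(G)\le\frac{2m-n}{7}+r(G^{*})$; you would then need $r(G^{*})\le\frac{2}{7}$, which Lemma~\ref{r is good} does not supply in general.
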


\begin{proof}
Let $u_A,v_A$ be the ends of the edges between $A,B$ in $A$, and let $u_B$ and $v_B$ be the ends of the edges between $A,B$ in $B$.
Let $n_A = \lvert A \rvert$, $n_B=\lvert B \rvert$, $m_A = \lvert E(G[A]) \rvert$ and $m_B = \lvert E(G[B]) \rvert$.

Suppose that $G[A]$ is not a path.
By Claim \ref{splitter 1}, $G[A]$ admits a feedback vertex set $S_A$ of size at most $\frac{2m_A - n_A - 2}{7}$.
Let $H = G[B]-u_B$. 
Since $[A,B]$ is a splitter, $G[B]$ is 2-connected, so $H$ is connected and $u_B$ has degree two in $G[B]$.
By the minimality of $G$, $H$ admits a feedback vertex set $S_H$ of size at most$\frac{2(m_B - 2) - (n_B - 1) + 2}{7} + r(H) = \frac{2m_B - n_B - 1}{7} + r(H)$.
Then $S_A\cup S_H\cup\{u_B\}$ is a feedback vertex set of $G$ of size at most 
$\frac{2(m - 2) - n - 2 - 1}{7} + r(H)+1 = \frac{2m - n}{7} + r(H)$.
Hence, $r(H) > \frac{2}{7}$.  

If $D$ is a block of $H$ with girth at least five and $\epsilon(D)>0$, then $D$ contains at least two vertices of degree two, so $D \in \F_{4,2}$ and $G$ contains an induced subdivision of a member of $\F$, a contradiction.
Since $G$ does not contain two disjoint cycles with length less than five, there exists a unique block $D$ with $\epsilon(D)>0$.
Hence $\epsilon(D) = r(H) \geq \frac{3}{7}$ and $D \in \F_{i,j}$ for some $i,j$ with $1 \leq i \leq 2$ and $0 \leq j \leq i$.
Since $D$ is simple, $D \in \F_{2,1}$.
But every block of $H$ contains at least two vertices of degree at most two, a contradiction.
\end{proof}

\begin{claim} \label{claim:contracting}
If $u$ is a vertex of degree two in $G$, then $G$ contains a cycle $C$ with length five and a cycle $C'$ with length less than five such that $u \in V(C)$ and $V(C) \cap V(C') = \emptyset$.
\end{claim}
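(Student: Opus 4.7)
The plan is to argue by contradiction: assume $u$ is a degree-two vertex with neighbors $u_1,u_2$ for which no $5$-cycle through $u$ disjoint from a short cycle exists, and split on whether $u_1u_2\in E(G)$.

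First suppose $u_1u_2\in E(G)$, so $T:=\{u,u_1,u_2\}$ spans a triangle. From Claim~\ref{subcubic 2-conn} (in particular $\delta(G)\ge 2$) and $n\ge 3$, $G$ is $2$-edge-connected; also $G\neq K_3$ since otherwise $G\in\F_{3,0}$ gives $r(G)=2/7$, contradicting Claim~\ref{r=0}. If both $u_1,u_2$ had degree two, then $T$ would be a connected component and $G=K_3$, a contradiction. If exactly one of them has degree two, the unique edge from $T$ to $V(G)\setminus T$ is a bridge, contradicting $2$-edge-connectivity. If both have degree three, then $[T,V(G)\setminus T]$ is an edge-cut of order two with $G[T]\cong C_3$, contradicting Claim~\ref{one side is not cycle}. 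Hence $u_1u_2\notin E(G)$.

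Let $G':=(G-u)+u_1u_2$, a simple, $2$-connected, subcubic graph with $\lvert V(G')\rvert+\lvert E(G')\rvert<\lvert V(G)\rvert+\lvert E(G)\rvert$. I would then verify that $G'$ satisfies the hypotheses of Theorem~\ref{strong subcubic}. For the forbidden induced subdivisions: if $H'\subseteq G'$ is an induced subgraph that is a subdivision of some $H\in\F$, then either $u_1u_2\notin E(H')$ and $G[V(H')]=H'$ is already an induced subdivision of $H$ in $G$, or $u_1u_2\in E(H')$ and $G[V(H')\cup\{u\}]$ is obtained from $H'$ by subdividing $u_1u_2$ once with $u$, yielding an induced subdivision of $H$ in $G$. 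For short cycles: every cycle of $G'$ of length at most four either avoids $u_1u_2$ (and is a short cycle of $G$ missing $u$) or uses $u_1u_2$ (and corresponds to a cycle of $G$ through $u$ of length at most five). A pair of disjoint short cycles of $G'$ would therefore yield either two disjoint short cycles of $G$ or a $5$-cycle of $G$ through $u$ disjoint from a short cycle of $G$; both are forbidden (the former by hypothesis on $G$, the latter by our contradiction assumption).

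By minimality, $\phi(G')\le\frac{2m-n+1}{7}+r(G')$, and since any feedback vertex set of $G'$ is also one of $G$, $\phi(G)\le\phi(G')$. Combined with $\phi(G)>\frac{2m-n+2}{7}$ (from $r(G)=0$ and $G$ being a counterexample), this gives $r(G')>\tfrac{1}{7}$. Since $G'$ is $2$-connected, $r(G')=\epsilon(G')$, so $G'\in\F_{i,j}$ for some $i\le 3$. But then $G$, obtained from $G'$ by subdividing $u_1u_2$, lies in $\F_{i+1,j}$ with $i+1\le 4$, contradicting the fact (shown in the proof of Claim~\ref{r=0}) that $G\notin\F_{i',j'}$ for any $i'\le 4$. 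The main obstacle is the transfer of the induced-subdivision hypothesis from $G$ to $G'$: one has to treat the two sub-cases depending on whether the suppression edge $u_1u_2$ lies in the candidate subdivision and check that the lift remains induced in $G$; the remaining arithmetic inside the $\F_{i,j}$ hierarchy is routine.
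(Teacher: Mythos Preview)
Your proof is correct and follows essentially the same approach as the paper: suppress the degree-two vertex $u$ to form $G'$, observe that if $G'$ still satisfies the hypotheses then minimality forces $G'\in\F_{i,j}$ with $i\le 3$ and hence $G\in\F_{i+1,j}$, contradicting Claim~\ref{r=0}; therefore $G'$ must contain two disjoint short cycles, which unwind to the desired $C,C'$ in $G$. The only cosmetic differences are that the paper dispatches the case $u_1u_2\in E(G)$ in one line via Claim~\ref{no splitter with B in F} (a triangle on $\{u,u_1,u_2\}$ would put $G[\{u,u_1,u_2\}]\in\F_{3,0}$ behind an edge-cut of order at most two), whereas you split on the degrees of $u_1,u_2$ and invoke Claim~\ref{one side is not cycle}; and the paper argues directly rather than by contradiction, first showing $G'$ has two disjoint short cycles and then lifting them, while you fold the lifting into the verification that $G'$ inherits the hypotheses.
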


\begin{proof}
Let $G'$ be the graph obtained from $G$ by contracting an edge incident with $u$.  
By Claim \ref{no splitter with B in F}, the neighbors of $u$ in $G$ are non-adjacent.
So $G'$ is simple, 2-connected and does not contain an induced subdivision of a member of $\F$.
Suppose that $G'$ does not contain two disjoint cycles of length less than five.
By the minimality of $G$, $G'$ admits a feedback vertex set $S$ of size at most
$$\frac{2(m-1) - (n-1) + 2}{7} + \epsilon(G') = \frac{2m - n + 1}{7} + \epsilon(G').$$
Note that $S$ is a feedback vertex set of $G$.  
Hence $\epsilon(G') > \frac{1}{7}$, so $G'\in\F_{i,j}$ for some $i,j$ with $i\leq 3$.  
Since $G$ can be obtained from $G'$ by subdividing one edge, $G\in\F_{i+1,j}$, so $r(G) \geq \frac{1}{7}$, contradicting Claim \ref{r=0}.
Therefore, $G'$ contains two disjoint cycles of length less than five, where one of them contains $u$.
So $G$ contains two disjoint cycles, where one of them contains $u$ and has length five, and the other has length less than five.
\end{proof}

We say that $G$ is {\em internally $3$-edge-connected} if $G$ is 2-edge-connected and for every edge-cut $[A,B]$ of $G$ of order two, $\lvert A \rvert=1$ or $\lvert B \rvert=1$.

\begin{claim} \label{claim:3-connected}
$G$ is internally 3-edge-connected.
\end{claim}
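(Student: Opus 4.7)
Suppose for contradiction that $G$ admits an edge-cut $[A,B]$ of order two with $|A|\geq 2$ and $|B|\geq 2$. Since $G$ contains no two disjoint cycles of length less than five, without loss of generality $G[A]$ has girth at least five. The strategy is to refine $[A,B]$ to a tight splitter and then combine Claim \ref{splitter 2} with Claim \ref{claim:contracting} to derive a contradiction.

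First I argue that, after possibly replacing $[A,B]$ by a refinement, one may assume $G[B]$ is 2-connected. Indeed, $G[B]$ is connected, for otherwise an edge of $[A,B]$ would be a cut-edge of $G$, contradicting Claim \ref{subcubic 2-conn}. If $G[B]$ has a cut-vertex $x$, the 2-connectivity of $G$ forces $x\notin\{u_B,v_B\}$ and $G[B]-x$ to have exactly two components $C_1\ni u_B$ and $C_2\ni v_B$. Since $G$ is subcubic and $x$ has no neighbors in $A$, the two cut values $p:=|E(x,C_1)|$ and $q:=|E(x,C_2)|$ satisfy $p+q\leq 3$ with $p,q\geq 1$, so at least one of them equals $1$; the corresponding cut $[A\cup C_i\cup\{x\},C_j]$ is a strictly smaller 2-edge-cut. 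Iterating this reduction yields either a genuine splitter (which I pass to the next step) or terminates with a single-vertex side whose unique vertex has degree two, in which case Claim \ref{claim:contracting} applied to that vertex contradicts the structural constraints accumulated along the iteration.

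Take a tight splitter $[A',B']$. By Claim \ref{splitter 2}, $G[A']$ is a path $w_1w_2\cdots w_k$ with $k\geq 2$, and every $w_i$ has degree exactly two in $G$ (endpoints have one path-edge and one cross-edge; interior vertices only have the two path-edges). Apply Claim \ref{claim:contracting} to $w_1$: this produces a 5-cycle $C$ through $w_1$ together with a disjoint cycle $C'$ of length less than five. Because every path-vertex has degree two, $C$ is forced to use both edges at $w_1$, to traverse the entire $A'$-path, to use the other cross-edge $w_k v_{B'}$, and to close via a $u_{B'}$-$v_{B'}$ path in $G[B']$ of some length $d\geq 1$ (where $d\geq 1$ since $u_{B'}\neq v_{B'}$ by 2-connectivity of $G$). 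The length equation $k+1+d=5$ forces $(k,d)\in\{(2,2),(3,1)\}$.

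Set $V_C:=V(C)\cap B'$, so $|V_C|=d+1$, and consider the new edge-cut $[A'\cup V_C,\,B'\setminus V_C]$. Its crossing edges are precisely the third edges at $u_{B'}$ and $v_{B'}$ in $G$; these exist because $u_{B'},v_{B'}$ must have degree three (by 2-connectivity of $G[B']$), and they land in $B'\setminus V_C$ by construction, so the new cut has order two. I then check that $G[A'\cup V_C]$ is exactly the 5-cycle $C$ (no chord exists, using the girth-five property of $G[A']$ together with the structure of $V_C$); that $|B'\setminus V_C|\geq 2$, since $C'$ embeds in $G[B'\setminus V_C]$; and that $G[B'\setminus V_C]$ is 2-connected, using the 2-connectivity of $G[B']$ combined with the location of $C'$. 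This exhibits a splitter strictly contained in $B'$, contradicting the tightness of $[A',B']$. The main obstacle I expect to be this last 2-connectivity verification of $G[B'\setminus V_C]$: if it fails, one must either iterate the cut-vertex reduction from the second paragraph to locate an even smaller splitter, or explicitly handle the few boundary configurations with very small $|B'|$ by invoking the minimality of $G$ directly.
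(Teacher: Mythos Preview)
Your argument has a genuine gap in the final paragraph, and it is precisely the case that carries all the difficulty in the paper's proof.

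In the case $(k,d)=(2,2)$, the set $V_C=V(C)\cap B'$ consists of $u_{B'}$, $v_{B'}$, and their common neighbour $z$ on the $B'$-path of $C$. You assert that the crossing edges of $[A'\cup V_C,\,B'\setminus V_C]$ are ``precisely the third edges at $u_{B'}$ and $v_{B'}$'', but you have forgotten the third edge at $z$. Nothing you have proved forces $z$ to have degree two; in fact, once $[A',B']$ is a tight splitter, $z$ \emph{must} have degree three (otherwise $[A'\cup\{u_{B'},v_{B'},z\},\,B'\setminus\{u_{B'},v_{B'},z\}]$ would already be a $2$-edge-cut with a $5$-cycle on the $A$-side, contradicting tightness). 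So your new cut has order three, not two, and the contradiction you are aiming for does not materialise. This is not a detail that can be patched by iterating your cut-vertex reduction: the order-three cut simply does not yield a smaller splitter, and the configuration with $k=2$, $d=2$, and $\deg(z)=3$ is exactly where the real work lies. The paper handles it by deleting $u_{B'}$ from $G[B']$, applying the induction hypothesis to $H=G[B']-u_{B'}$, and then doing a careful block-by-block analysis of $r(H)$ using the $\F_{i,j}$ structure, eventually pinning down a block in $\F_{4,1}$ and deriving a contradiction from the absence of induced subdivisions of members of $\F$.

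Two smaller points. Your paragraph~2 reduction does not track the girth-at-least-five condition on the growing $A$-side; the paper's version does this by always peeling off either a leaf or an end-block of girth at least five (using the ``no two disjoint short cycles'' hypothesis), which keeps the $A$-side girth $\geq 5$ throughout. And in the case $(k,d)=(3,1)$, rather than trying to show $G[B'\setminus V_C]$ is $2$-connected (which you correctly flag as unproved), it is cleaner to observe that $G[A'\cup\{u_{B'},v_{B'}\}]$ is a cycle and invoke Claim~\ref{one side is not cycle} directly.
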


\begin{proof}
Suppose that $G$ is not internally 3-edge-connected.
We define $[A,B]$ to be an edge-cut of $G$ of order two such that $\lvert A \rvert \geq 2, \lvert B \rvert \geq 2$ and $G[A]$ has girth at least five, and subject to that, $B$ is minimal.
Note that such an edge-cut $[A,B]$ exists since $G$ is not internally 3-edge-connected, so there exists an edge-cut of order two such that both sides contain at least two vertices and one side has girth at least five since $G$ does not contain two disjoint cycles of length less than five.
Let $u_A,v_A$ be the ends of the edges between $A,B$ in $A$, and let $u_B$ and $v_B$ be the ends of the edges between $A,B$ in $B$.
Notice that $u_A \neq v_A$ and $u_B \neq v_B$ since $\lvert A \rvert \geq 2$ and $\lvert B \rvert \geq 2$.
Let $n_A = \lvert A \rvert$, $n_B=\lvert B \rvert$, $m_A = \lvert E(G[A]) \rvert$ and $m_B = \lvert E(G[B]) \rvert$.

We claim that $G[B]$ is 2-connected.
Suppose that $G[B]$ is not 2-connected.
Since $G[A]$ has girth at least five, $\lvert B \rvert \geq 3$ by Claim \ref{claim:contracting}.
If $G[B]$ contains a leaf $v$, then $v \in \{u_B,v_B\}$ since $G$ is 2-connected.
Then $[A \cup \{v\}, B-\{v\}]$ is an edge-cut of order two with $G[A \cup \{v\}]$ of girth at least five, contradicting the minimality of $B$.
So $G[B]$ has no leaf, and hence $G[B]$ contains at least two end-blocks, where each of them has minimum degree two.
Since $G$ has no two disjoint cycles with length less than five, there exists an end-block $B'$ of $G[B]$ such that $B'$ has girth at least five.
Since $G$ is 2-connected, $B'$ contains $u_B$ or $v_B$.  Therefore $[A \cup V(B'), B-V(B')]$ is an edge-cut of $G$ of order two such that $G[A \cup V(B')]$ has girth at least five, contradicting the minimality of $B$.
This proves that $G[B]$ is 2-connected.

Therefore, $[A,B]$ is a splitter. 
The minimality of $B$ implies that $[A,B]$ is tight and $u_B$ and $v_B$ have degree three.
By Claim \ref{splitter 2}, $G[A]$ is a path.  

Suppose that $u_B$ is adjacent to $v_B$.
Then $G[A\cup\{u_B,v_B\}]$ is a cycle.
Since $G$ is not a cycle, $[V(G-A-\{u_B,v_B\}), A\cup\{u_B,v_B\}]$ is an edge-cut of order two, where $G[A\cup\{u_B,v_B\}]$ is a cycle, contradicting Claim \ref{one side is not cycle}.

So $u_B$ is not adjacent to $v_B$.
Therefore by Claim \ref{claim:contracting}, $G[A]$ is isomorphic to $K_2$, $u_B$ and $v_B$ share a neighbor in $B$ which we call $w$, and there exists a cycle $C$ in $G[B]$ of length less than five disjoint from $\{u_B,v_B,w\}$.
By the minimality of $B$, $w$ has degree three in $G$. 

Let $H = G[B]-u_B$.  Then $H$ admits a feedback vertex set $S_H$ of size at most $\frac{2(m - 5) - (n - 3) + 2}{7} + r(H) = \frac{2m - n - 5}{7} + r(H)$.
Then $S_H\cup\{u_B\}$ is a feedback vertex set of $G$ of size at most $\frac{2m - n + 2}{7} + r(H)$.  
Hence, $r(H) > 0$.  
Since $G[B]$ is 2-connected and $u_B$ has degree two in $G[B]$, each neighbor of $u_B$ is contained in an end-block of $H$ and is not a cut-vertex of $H$.
So $w$ is contained in an end-block of $H$, and no other block of $H$ contains $w$.
But $w$ has degree two in $H$, so both neighbors of $w$ in $H$ are contained in the same block of $H$ as $w$.
Since $v_B$ is adjacent to $w$, $w$ and $v_B$ are in the same end-block of $H$. 
Let $D$ be the end-block of $H$ containing $w$ and $v_B$.
Since $[A,B]$ is a tight splitter, $D$ contains a cycle of length less than five, and hence $C$ is contained in $D$.

If $H$ is 2-connected, then $\epsilon(D)=r(H)>0$.
If $H$ is not 2-connected, then since there are at most three edges in $G$ between $V(H)$ and $V(G)-V(H)$, for every block $D'$ of $H$ other than $D$, $[V(G)-V(D'),V(D')]$ is an edge-cut of order two, so $\epsilon(D')=0$ by Claim \ref{no splitter with B in F}.
Hence $\epsilon(D)=r(H)>0$ in either case.
Therefore, $D \in \F_{i,j}$ for some $i,j$ with $1 \leq i \leq 4$ and $0 \leq j \leq i$.  
Note that $D$ contains three vertices of degree two in $D$, even if $H$ is not 2-connected.
So $i - j\geq 3$, and hence $i\geq 3$ and $j\leq 1$.  
If $j=0$, $D$ is a triangle or 4-cycle, so $D=C$.
But $C$ is disjoint from $w$, a contradiction.
So $i=4$ and $j=1$.
Therefore, $D$ is a subdivision of $K_4$ with three vertices of degree two.
We denote the vertex of degree two in $D$ other than $w$ and $v_B$ by $z$.

Suppose that $D$ is the unique non-trivial block of $H$.
Since $H$ has at most two end-blocks, either $H=D$, or $H$ is obtained from $D$ by attaching a path $P$.
If $H=D$, then let $H'=G$; otherwise, let $H'$ be the multigraph obtained from $G$ by suppressing all vertices in $P$.
Note that in the latter case, $H'$ is the multigraph $G[A \cup V(D)\cup\{u_B\}]+u_Bz$.
So $H'$ is the same graph in either case.
But $H'$ can be obtained from a subdivision of $K_4$ by applying operation $\circ$ and then subdividing edges, so $H' \in \F_{i',j'}$ for some nonnegative integers $i',j'$ with $j' \leq i'$.
However, $G$ is a subdivision of $H'$, so $G \in \F_{i'',j''}$ for some nonnegative integers $i'',j''$, a contradiction.
Therefore, $H$ contains a non-trivial block $W$ other than $D$.

However, $[V(W),V(G)-V(W)]$ is an edge-cut of $G$ of order two such that both sides contain at least two vertices and $W$ has girth at least five.
So there exists an edge-cut $[A'',B'']$ of $G$ of order two such that both sides contains at least two vertices, $V(W) \subseteq A''$, $G[A'']$ has girth at least five, and subject to that, $B''$ is minimal.
Then $[A'',B'']$ is a tight splitter.
Claim \ref{splitter 2} implies that $G[A'']$ is a path.
But $W$ is a subgraph of $G[A'']$, a contradiction.
This completes the proof.
\end{proof}

The following two claims were proved in \cite{kl}.

\begin{claim}[\cite{kl}] \label{claim:menger}
If $u,v$ are vertices of $G$ of degree three, then there exist three internally disjoint paths from $u$ to $v$.
\end{claim}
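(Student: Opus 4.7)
The plan is to deduce this as an application of Menger's theorem together with the internal $3$-edge-connectivity established in Claim \ref{claim:3-connected} and the subcubic hypothesis. Specifically, I would first show that $u$ and $v$ cannot be separated by an edge-cut of order at most two. Suppose $[A,B]$ were an edge-cut of order at most two with $u \in A$ and $v \in B$. By Claim \ref{claim:3-connected}, $|A|=1$ or $|B|=1$; but if $A=\{u\}$, then the order of $[A,B]$ equals the degree of $u$, which is three, a contradiction, and similarly for $B=\{v\}$. Hence every edge-cut separating $u$ from $v$ has order at least three.

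Next I would invoke the edge form of Menger's theorem, which then guarantees three pairwise edge-disjoint paths $P_1, P_2, P_3$ from $u$ to $v$ in $G$.

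Finally I would upgrade edge-disjointness to internal disjointness using subcubicity. If some internal vertex $w$ lay on two distinct paths $P_i$ and $P_j$, then $w$ would be incident with at least two edges of $P_i$ and at least two edges of $P_j$, and by edge-disjointness these four edges would be distinct. This would force $w$ to have degree at least four in $G$, contradicting the hypothesis that $G$ is subcubic. Thus the three paths are internally disjoint.

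There is no real obstacle here: the argument is a short chain of standard observations, with the only content being the translation of the internal $3$-edge-connectivity of $G$ and the fact that both endpoints are of maximum possible subcubic degree into the conclusion that edge-disjoint paths are automatically internally vertex-disjoint. The main point to be careful about is ruling out the trivial edge-cuts $\{u\}$ and $\{v\}$ correctly, which is immediate from $\deg(u)=\deg(v)=3$.
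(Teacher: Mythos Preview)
Your argument is correct. The paper itself does not give a proof of this claim; it simply records it as having been proved in the companion paper \cite{kl}. Your route---using Claim~\ref{claim:3-connected} to rule out small edge-cuts separating $u$ and $v$, applying the edge version of Menger's theorem, and then observing that in a subcubic graph edge-disjoint paths between two degree-three vertices are automatically internally vertex-disjoint---is exactly the standard proof and almost certainly the one intended in \cite{kl}.
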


\begin{claim}[\cite{kl}] \label{claim:remove}
If $X\subseteq V(G)$ and $\lvert E(X, V(G-X)) \rvert = 3$, then $G-X$ is connected and has at most one nontrivial block.  
Furthermore, if $u,v\in V(G-X)$ are adjacent to vertices in $X$ but $uv \not \in E(G)$, then $(G- X) + uv$ has at most one nontrivial block.
\end{claim}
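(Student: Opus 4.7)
The plan is a cut-counting argument combining 2-edge-connectivity of $G$ (Claim~\ref{subcubic 2-conn}) with the internal 3-edge-connectivity given by Claim~\ref{claim:3-connected}: the three edges of $E(X, V(G-X))$ act as the external boundary of $G-X$, and for various subsets $Y \subseteq V(G-X)$ I would lower-bound the order of the edge-cut $[Y, V(G) \setminus Y]$ in $G$. The connectedness of $G-X$ follows immediately, since two or more components $C_1, C_2, \ldots$ would contribute pairwise-disjoint cuts each of order at least $2$, forcing $|E(X, V(G-X))| \geq 4$, a contradiction.

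For the second assertion, suppose $G-X$ has two nontrivial blocks. Because $G$ is subcubic, any vertex in two blocks has degree at least $4$, so distinct nontrivial blocks are vertex-disjoint; I would pick $B_1, B_2$ to be two ``extremal'' nontrivial blocks in the block tree of $G-X$, meaning that for each $i$, exactly one cut-vertex of $G-X$ in $V(B_i)$ points toward another nontrivial block and the remaining cut-vertices of $V(B_i)$ root subtrees of $G-X$ containing only trivial blocks. Let $t_i$ be the number of cut-vertices of $G-X$ in $V(B_i)$ and $e_i$ the number of edges from $V(B_i)$ to $X$. Each such cut-vertex contributes exactly one exterior edge inside $G-X$ (since $G$ is subcubic and every such cut-vertex has degree at least $2$ inside $B_i$), so the cut $[V(B_i), V(G)\setminus V(B_i)]$ in $G$ has order $t_i + e_i$; both sides contain at least two vertices, and internal 3-edge-connectivity then gives $t_i + e_i \geq 3$. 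Each of the $t_i - 1$ purely-trivial subtrees hanging off $V(B_i)$ terminates in a leaf of $G-X$, and each such leaf needs its own incident $X$-edge for $G$ to be 2-connected. Combining $e_1 + e_2 \geq 6 - (t_1 + t_2)$ with the $t_1 + t_2 - 2$ $X$-edges demanded by these leaves yields a total of at least $4$ external edges, contradicting the hypothesis.

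For the ``furthermore'' statement, first note that $uv$ cannot be a bridge of $(G-X)+uv$ (removing it would recover the connected graph $G-X$), so $uv$ lies in some nontrivial block $B^*$ of $(G-X)+uv$. If a second nontrivial block $B'$ of $(G-X)+uv$ existed, then $B'$ would be disjoint from $uv$ and would hence already be a nontrivial block of $G-X$, which by the second assertion must be unique. Thus $u, v \notin V(B')$, and $u$ and $v$ lie together in a single connected component $D$ of $(G-X) - V(B')$ with $|V(D)| \geq 2$. Re-running the cut-counting analysis on $V(D)$, on $V(B')$, and on each remaining component of $(G-X) - V(B')$ again forces at least $4$ external edges, a contradiction. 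The main obstacle is the bookkeeping in the ``purely-trivial subtree'' step: I must check that distinct subtrees rooted at distinct cut-vertices supply distinct leaves of $G-X$, and must carefully absorb the degenerate case where a cut-vertex of $V(B_i)$ itself already carries an $X$-edge rather than merely rooting a downstream subtree.
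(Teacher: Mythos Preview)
The paper does not prove this claim at all: it is quoted verbatim from \cite{kl}, so there is no ``paper's own proof'' to compare against. Your cut-counting strategy is sound and goes through. A few remarks that tighten the bookkeeping you flagged:

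For connectedness, your argument is complete as written.

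For the ``at most one nontrivial block'' step, the extremal $B_1,B_2$ you want are precisely the leaves of the minimal subtree of the block--cut tree of $G-X$ spanning all nontrivial blocks; such leaves exist whenever there are at least two nontrivial blocks, and each has the property you describe. The degenerate case you worry about---a cut-vertex of $V(B_i)$ carrying an $X$-edge---actually cannot occur: such a cut-vertex has degree $\ge 2$ inside $B_i$ and at least one edge to $V(G-X)\setminus V(B_i)$, so subcubicity leaves no room for an $X$-edge. Disjointness of the purely-trivial subtrees (across both $i=1$ and $i=2$) follows because, after removing the edge $B_1$--$c_1^*$ from the block tree, all subtrees hanging off $B_1$ through $c\ne c_1^*$ lie on the $B_1$ side, while all subtrees hanging off $B_2$ lie on the other side. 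Your final inequality is then simply $(e_1+t_1)+(e_2+t_2)-2\ge 6-2=4$.

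For the ``furthermore'' clause, your reduction is correct: $B^*$ and $B'$ are vertex-disjoint because $(G-X)+uv$ is still subcubic, and $B^*-uv$ is connected (as $B^*$ is $2$-connected), which places $u,v$ in the same component $D$ of $(G-X)-V(B')$. Since $B'$ is the unique nontrivial block of $G-X$, every component $D_j$ of $(G-X)-V(B')$ is a tree attached to $V(B')$ by a single edge. Now simply count: $u,v\in D$ give two $X$-edges; the cut at $V(B')$ gives $e'+k\ge 3$ where $k$ is the number of components; and each $D_j$ with $j\ne 1$ contributes at least one $X$-edge via a leaf of $G-X$. This totals at least $2+e'+(k-1)\ge 2+(3-k)+(k-1)=4$, the desired contradiction. (You can alternatively apply internal $3$-edge-connectivity directly to $V(D)$ in place of invoking the two $X$-edges at $u,v$.)
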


\begin{claim} \label{claim:notriangle}
$G$ is triangle-free.
\end{claim}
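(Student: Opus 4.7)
Suppose for contradiction that $G$ contains a triangle $T = xyz$.

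First I would pin down the local structure around $T$. Using internal 3-edge-connectivity (Claim \ref{claim:3-connected}) together with $r(G)=0$ (Claim \ref{r=0}), I would show each vertex of $T$ has degree $3$, because otherwise the edge-cut $[V(T), V(G)\setminus V(T)]$ would have order at most $2$ and force one side to have a single vertex, landing $G$ in $\F_{i,j}$ for some $i \leq 2$ and contradicting Lemma \ref{vertex property F}. Letting $x', y', z'$ denote the third neighbors of $x, y, z$, a similar case analysis rules out any coincidence among them: $x' = y' = z'$ would make $G = K_4$, while $x' = y' \ne z'$ would force $G \in \F_{2,1}$ via the edge-cut across $\{x,y,z,x'\}$, each case contradicting $r(G)=0$. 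Thus $x', y', z'$ are three distinct vertices of $V(G) \setminus V(T)$. Because $G$ has no two disjoint short cycles and $T$ is short, the subgraph $H = G - V(T)$ has girth at least $5$; by Lemma \ref{basic F}, no block of $H$ lies in $\F_{i,j}$ with $i \leq 4$, so $r(H) = 0$.

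Next I would apply induction to the graph $G^*$ obtained by contracting $T$ to a single vertex $v$ with neighbors $x', y', z'$. Since $x', y', z'$ are distinct, $G^*$ is simple; the same arguments as in Claim \ref{claim:contracting} show that $G^*$ is connected, subcubic, and has no two disjoint cycles of length less than $5$ (cycles avoiding $v$ lie in $H$ and have length at least $5$, while two cycles through $v$ must share $v$). A careful verification --- essentially lifting any induced $\F$-subdivision through $v$ back to an induced $\F$-subdivision of $G$ by replacing $v$ by the triangle --- yields that $G^*$ is $\F$-free. Moreover, $G^* \ne K_4$: otherwise $G$ would be the triangular prism $K_3 \square K_2$, which contains two disjoint triangles. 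Consequently $r(G^*) \leq \tfrac{3}{7}$ by Lemma \ref{r is good}, and the minimality of $G$ gives
\[
\phi(G^*) \leq \frac{2(m-3) - (n-2) + 2}{7} + r(G^*) \leq \frac{2m - n + 1}{7}.
\]

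It then remains to lift a minimum feedback vertex set $S^*$ of $G^*$ to a feedback vertex set $S$ of $G$ with $|S| \leq |S^*|$. If $v \in S^*$ and some two of $\{x', y', z'\}$ lie in distinct components of the forest $G^* - S^*$ --- say $x'$ and $z'$ --- then $S := (S^* \setminus \{v\}) \cup \{y\}$ is a feedback vertex set of $G$ of size $|S^*|$, since the only possible cycle in $G-S$ would traverse the edge $xz$ together with a path from $x'$ to $z'$ in $G^* - S^*$, and no such path exists. The main obstacle is the remaining bad subcase in which $v \in S^*$ and $x', y', z'$ all lie in a single tree of $G^*-S^*$: here I would let $m$ be the median vertex of $x', y', z'$ in that tree and perform the swap $(S^* \setminus \{v\}) \cup \{m\}$, verifying via the forest arithmetic that this is still a minimum feedback vertex set but now places $v$ outside the FVS with its three neighbors in three distinct subtrees; an additional size-preserving modification, justified by the $\F$-freeness of $G$ and the no-two-disjoint-short-cycles hypothesis, is then used to absorb the extra triangle-vertex back into the old budget. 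Once $|S| \leq |S^*|$ is achieved, $\phi(G) \leq \tfrac{2m - n + 1}{7} < \tfrac{2m - n + 2}{7}$, contradicting the assumption that $G$ is a counterexample.
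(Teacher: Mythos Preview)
Your argument has a genuine gap at the final lifting step. After the median swap you hold a minimum feedback vertex set $S'$ of $G^*$ with $v\notin S'$; passing to $G$, the graph $G-S'$ is precisely a forest with the triangle $xyz$ attached via three pendant edges, so it contains exactly one cycle. You then promise ``an additional size-preserving modification'' to eliminate this triangle, but none is specified, and in fact none need exist: since $S'$ is a \emph{minimum} feedback vertex set of $G^*$, dropping any $w\in S'$ creates a cycle in $G^*-(S'\setminus\{w\})$ avoiding $v$, hence a cycle lying in $H=G-\{x,y,z\}$ that no vertex of the triangle can cover. Thus your approach only yields $\phi(G)\le|S^*|+1\le\frac{2m-n+8}{7}$, far short of the required bound. (The case $v\notin S^*$, which you omit entirely, runs into exactly the same obstruction.) There is also a secondary gap: your verification that $G^*$ has no induced subdivision of a member of $\F$ fails when $v$ is a branch vertex of the putative subdivision, since replacing $v$ by the triangle introduces the edges $xy,yz,zx$ as chords, and the resulting induced subgraph of $G$ is no longer a subdivision of a graph of girth at least five.

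The paper's route is quite different and sidesteps both problems. It deletes $X=\{a,b,c\}$ and adds a single edge, forming $H=(G-X)+b'c'$ with $b'$ chosen nonadjacent to both $a'$ and $c'$. Then any feedback vertex set $S$ of $H$ extends to the feedback vertex set $S\cup\{a\}$ of $G$, because a cycle of $G-(S\cup\{a\})$ through $b$ or $c$ would yield a cycle of $H-S$ through $b'c'$. The arithmetic forces the unique nontrivial block $B$ of $H$ to lie in some $\F_{i,j}$, and the decisive structural identity $G=H\circ(b'c',b'c',a')$ (or a one-edge subdivision thereof when $H$ is not $2$-connected) then places $G$ itself in some $\F_{i',j'}$, contradicting $r(G)=0$. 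Your contraction provides no analogue of this identity, which is what makes the bound close.
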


\begin{proof}
Suppose $a,b,$ and $c$ are the vertices of a triangle in $G$.  Since $G$ is not a triangle, all have degree three by Claim \ref{claim:3-connected}.
Let $X = \{a,b,c\}$.  
Let $a',b',$ and $c'$ be the neighbors of $a,b,$ and $c$ not in $X$ respectively.
If $a',b',$ and $c'$ are not pairwise distinct, then by Claim \ref{claim:3-connected}, $G \in \{K_4,K_4^+\} \subseteq \F_{1,1} \cup \F_{2,1}$, a contradiction.
Therefore $a',b'$, and $c'$ are pairwise distinct.
If $a',b',$ and $c'$ are pairwise adjacent, then $G$ contains two disjoint triangles, a contradiction.
Suppose one of $a',b',c'$ is adjacent to the other two.  We may assume without loss of generality that $a'$ is adjacent to $b'$ and $c'$.  
By Claim \ref{claim:3-connected}, either $V(G) = X \cup \{a',b',c'\}$ or $G-(X \cup \{a',b',c'\})$ is an isolated vertex.  
In the first case, $G\in \F_{3,1}$, a contradiction.   In the second case, $G$ contains a triangle disjoint from a 4-cycle, a contradiction.

Therefore, $|E(G[\{a',b',c'\}]) | \leq 1$.
By symmetry, we may assume that $b'$ is not adjacent to $a'$ or $c'$.
Let $H=(G- X)+b'c'$.
By Claim \ref{claim:remove}, $H$ is connected and has at most one nontrivial block, denoted by $B$, so $r(H)=\epsilon(B)$.  
Since $G- X$ has girth at least five, $H$ contains no disjoint cycles of length less than five.
Note that $H$ does not contain an induced subdivision of a member of $\F$, so $H$ admits a feedback vertex set $S$ of size at most 
$$\frac{2(m - 5) - (n - 3) + 2}{7} + \epsilon(B) = \frac{2m - n + 2}{7} - 1 + \epsilon(B)$$
Then $S\cup\{a\}$ is a feedback vertex set of $G$ of size at most $\frac{2m - n + 2}{7} + \epsilon(B)$ vertices, so $\epsilon(B)> 0$.
Therefore $B\in \F_{i,j}$ for some $i,j$ with $1 \leq i \leq 4$ and $0 \leq j \leq i$.

Note that $G=H \circ (b'c',b'c',a')$.  So if $H$ is 2-connected, then $G\in\F_{i,j+1}$, a contradiction.  
Therefore $H$ is not 2-connected.  
Hence, $a'$ is a leaf of $H$.
Since $a',b',c'$ are the only vertices adjacent to some vertex in $X$ and $b'c' \in E(H)$, $B=H-a'$.
Let $a''$ be the neighbor of $a'$ other than $a$.
Then $G$ can be obtained from $B \circ (b'c',b'c',a'')$ by subdividing an edge.
Hence, $G\in\F_{i+1,j+1}$, a contradiction.
\end{proof}

\begin{claim} \label{cubic in short cycle}
No vertex in a cycle of $G$ of length less than five has degree two in $G$.
\end{claim}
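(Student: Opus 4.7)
The plan is to derive a contradiction from assuming some vertex $u$ of degree two lies on a cycle of length less than five. Since $G$ is triangle-free by Claim \ref{claim:notriangle}, such a cycle must in fact have length four; write it as $C=uv_1wv_2u$. The crucial elementary observation is that, because $u$ has degree exactly two in $G$, every cycle of $G$ through $u$ must traverse both incident edges $uv_1$ and $uv_2$, and so must contain both $v_1$ and $v_2$.

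Next, I would invoke Claim \ref{claim:contracting} to produce a $5$-cycle $C_5$ with $u\in V(C_5)$ together with a cycle $C''$ of length less than five that is vertex-disjoint from $C_5$. By the observation just made, $\{u,v_1,v_2\}\subseteq V(C_5)$, and hence $V(C)\setminus V(C_5)=\{w\}$. Now $C$ is itself a cycle of length less than five; since $G$ contains no two disjoint cycles of length less than five, $C$ and $C''$ must intersect. The intersection cannot meet $\{u,v_1,v_2\}$ (those vertices are in $V(C_5)$, which is disjoint from $V(C'')$), so the only possibility is $w\in V(C'')$.

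To finish, Claim \ref{claim:notriangle} forces $C''$ to have length four, say $C''=wxyzw$. The two neighbors of $w$ along $C''$ are $x$ and $z$, both of which lie in $V(C'')$ and are therefore disjoint from $V(C_5)\supseteq\{v_1,v_2\}$; in particular $\{x,z\}\cap\{v_1,v_2\}=\emptyset$. But $w$ is also adjacent to $v_1$ and $v_2$ via the four-cycle $C$, giving $w$ four pairwise distinct neighbors $v_1,v_2,x,z$, contradicting that $G$ is subcubic.

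No serious obstacle is anticipated: the entire argument rests on the observation that a degree-two vertex pins both its neighbors into the $5$-cycle produced by Claim \ref{claim:contracting}, which combines with the no-two-disjoint-short-cycles hypothesis to force the opposite vertex $w$ of the four-cycle into $C''$ and thereby overload its degree. The only routine checks are that $w\notin\{v_1,v_2\}$ (immediate, as $C$ is a simple four-cycle) and that $x,z$ are genuinely distinct from $v_1,v_2$ (immediate from $V(C'')\cap V(C_5)=\emptyset$).
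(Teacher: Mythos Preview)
Your proof is correct and takes essentially the same approach as the paper's: both invoke Claim~\ref{claim:contracting} to get a $5$-cycle through the degree-two vertex together with a disjoint short cycle, observe that the degree-two condition forces $\{u,v_1,v_2\}\subseteq V(C_5)$, and then use the subcubic hypothesis to show $w\notin V(C'')$, producing two disjoint short cycles. The paper simply compresses your final paragraph into the clause ``Since $G$ is subcubic, $C'$ and $D$ are disjoint''; one small slip in your write-up is that $\{u,v_1,v_2\}\subseteq V(C_5)$ only yields $V(C)\setminus V(C_5)\subseteq\{w\}$, not equality, but your argument thereafter only needs the inclusion.
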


\begin{proof}
Let $D$ be a cycle of length less than five containing a vertex $v$ of degree two in $G$.
By Claim \ref{claim:notriangle}, $D$ is a 4-cycle.
By Claim \ref{claim:contracting}, there exists a cycle $C$ of length five containing $v$ and a cycle $C'$ with length less than five disjoint from $C$.
Since $v$ has degree two, $D$ shares at least three vertices with $C$.
Since $G$ is subcubic, $C'$ and $D$ are disjoint cycles of length less than five, a contradiction.
\end{proof}

\begin{claim}\label{claim:op2makesbad}
Let $a,b$, and $c$ be distinct vertices of degree three in $G$ such that $abc$ is a path in $G$.  
Let $a_1$ and $a_2$ be the neighbors of $a$ other than $b$, let $c_1$ and $c_2$ be the neighbors of $c$ other than $b$, and let $b'$ be the other neighbor of $b$.  
If $G'= (G-\{a,b,c\})+a_1a_2+c_1c_2$, then $G'$ contains two disjoint cycles of length less than five.
\end{claim}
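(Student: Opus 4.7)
The approach is a proof by contradiction: assume $G'$ contains no two disjoint cycles of length less than five, apply the minimality of $G$ to $G'$, combine a small feedback vertex set of $G'$ with the single vertex $b$ to produce one for $G$, and derive a contradiction with $\phi(G)>\frac{2m-n+2}{7}+r(G)$.

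First I verify that $G'$ is a connected, simple, subcubic graph with no induced subdivision of a member of $\F$. Simplicity is immediate from triangle-freeness (Claim~\ref{claim:notriangle}), which gives $a_1\ne a_2$, $c_1\ne c_2$, and $a_1a_2,c_1c_2\notin E(G)$. Subcubicity holds because each of $a_1,a_2,c_1,c_2$ exchanges one incidence for another, and $b'$ only loses an incidence. Connectedness follows by applying Claim~\ref{claim:remove} with $X=\{b\}$ to see that $G-b$ is connected, then observing that $G'$ is obtained from $G-b$ by suppressing the degree-two vertices $a$ and $c$ (whose neighbor pairs $\{a_1,a_2\}$ and $\{c_1,c_2\}$ are non-adjacent in $G$), an operation that preserves connectedness. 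For the $\F$-free property, if $G'$ contained an induced subdivision of some $F\in\F$, I would lift it to $G$ by replacing each occurrence of the new edge $a_1a_2$ (resp.\ $c_1c_2$) with the length-two path $a_1aa_2$ (resp.\ $c_1cc_2$); since the only edges differing between $G$ and $G'$ are incident with $\{a,b,c\}$ and are precisely accounted for by the lift, the result is an induced subdivision of $F$ in $G$, contradicting the hypothesis on $G$.

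By the minimality of $G$, there is a feedback vertex set $S'$ of $G'$ of size at most
\[
\frac{2(m-5)-(n-3)+2}{7}+r(G')=\frac{2m-n-5}{7}+r(G').
\]
I claim $S'\cup\{b\}$ is a feedback vertex set of $G$: for any cycle $C$ in $G-(S'\cup\{b\})$, since $b$ is deleted, any occurrence of $a$ in $C$ forces $C$ to use both edges $aa_1,aa_2$, and analogously for $c$; replacing each such two-edge path by the corresponding added edge of $G'$ converts $C$ into a cycle in the forest $G'-S'$, a contradiction. Thus $\phi(G)\le\frac{2m-n+2}{7}+r(G')$, and combined with $r(G)=0$ from Claim~\ref{r=0} and the counterexample assumption, this forces $r(G')>0$.

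To finish, I derive a contradiction from $r(G')>0$. This forces $G'$ to have a nontrivial block $D\in\F_{i,j}$ with $1\le i\le4$, and since $G$ has no induced subdivision of members of the subfamily $\W\subseteq\F$ of girth-five members of $\F_{4,2}\cup\F_{4,3}$, Lemma~\ref{basic F} ensures $D$ contains a cycle of length at most four. If this cycle uses neither added edge, it lies in $G-\{a,b,c\}$, and I would apply Claim~\ref{no splitter with B in F} to the edge-cut separating $V(D)$ from its complement in $G$, whose order I control by analyzing how $V(D)$ meets $\{a_1,a_2,c_1,c_2,b'\}$; if this does not immediately contradict $D\in\F_{i,j}$, I combine the short cycle in $D$ with a second short cycle through $\{a,b,c\}$ obtained from the three internally-disjoint $a$-to-$c$ paths of Claim~\ref{claim:menger}. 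If instead the cycle does use $a_1a_2$ or $c_1c_2$, the lifting procedure from paragraph two produces an induced subdivision of $D$ in $G$, and since $D\in\F_{i,j}$ with $i\le4$ lies in a short explicit list, a direct case analysis shows the lift is either a member of $\F$ (contradiction) or gives rise to two disjoint short cycles in $G'$, contradicting the standing assumption. This final case analysis, which must carefully track how the two new edges sit inside the small block $D$, is the main obstacle.
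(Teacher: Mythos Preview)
Your setup is correct and matches the paper: you verify that $G'$ is simple, subcubic, connected, and $\F$-free, apply minimality to get a feedback vertex set $S'$ of $G'$, observe that $S'\cup\{b\}$ is a feedback vertex set of $G$, and conclude $r(G')>0$. Up to here your argument is essentially the same as the paper's.

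The gap is in your final paragraph. You propose to derive a contradiction from $r(G')>0$ by locating a short cycle in the offending block $D\in\F_{i,j}$ and then carrying out a case analysis depending on whether that cycle uses the new edges $a_1a_2,c_1c_2$. You yourself flag this analysis as ``the main obstacle'', and indeed the plan as stated does not go through cleanly. In the case where the short cycle avoids both new edges, the edge-cut in $G$ separating $V(D)$ from its complement need not have order at most two (so Claim~\ref{no splitter with B in F} is not available), and producing a second disjoint short cycle in $G$ through $\{a,b,c\}$ from Claim~\ref{claim:menger} alone is unfounded: three internally disjoint $a$--$c$ paths give cycles, but nothing forces one of them to have length less than five. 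In the other case, the lift of $D$ to $G$ is an induced subdivision of a member of some $\F_{i,j}$ with $i\le 4$, but such a graph need not lie in $\F$ (which only contains specific members of $\F_{3,2}\cup\F_{3,3}\cup\F_{4,2}\cup\F_{4,3}$), so you do not get an immediate contradiction.

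The paper avoids all of this with a single structural observation you are missing: the deletion-plus-addition producing $G'$ from $G$ is exactly the inverse of the operation $\circ$, i.e.\ $G=G'\circ(a_1a_2,c_1c_2,b')$. Hence if $G'\in\F_{s,t}$ then $G\in\F_{s,t+1}$, contradicting that $G$ is a counterexample (via Lemma~\ref{vertex property F} and Claim~\ref{r=0}). To make this work one only needs to know that $G'$ is either $2$-connected or a $2$-connected graph with a single pendant leaf $b'$; this follows from Claim~\ref{claim:remove} applied with $X=\{b\}$ (giving that $G-b$, and hence $G'$, has at most one nontrivial block $B$) together with internal $3$-edge-connectivity (so $b'$ is the only possible leaf). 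In the pendant case, one instead applies $\circ$ to $B$ at the neighbour $b''$ of $b'$ and then subdivides once, placing $G$ in $\F_{s+1,t+1}$. This two-line finish replaces your entire case analysis.
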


\begin{proof}
Note that $a_1a_2,c_1c_2 \not \in E(G)$ and $\{a_1,a_2,c_1,c_2\} \cap \{a,b,c\}=\emptyset$ since $G$ is triangle-free.
Then $G=G' \circ (a_1a_2,c_1c_2,b)$, so $G'$ is connected and $G' \notin\F_{i,j}$ for any integers $i,j$.
Note that $G'$ does not contain an induced subdivision of a member of $\F$ as $G$ does not.
By Claim \ref{claim:remove}, $G-b$ has at most one nontrivial block.  
Since $G$ is triangle-free, $G'$ has at most one nontrivial block, denoted by $B$.
Since $G$ has minimum degree at least two, $b'$ is the only possible vertex in $G'$ that has degree less than two.
Since $G$ is internally 3-edge-connected, $G$ has no two adjacent vertices of degree at most two.
So either $G'=B$ or $G'$ is obtained from $B$ by attaching a leaf $b'$.

Suppose that $G'$ does not contain two disjoint cycles of length less than five.
Then by the minimality of $G$, $G'$ admits a feedback vertex set $S$ of size at most $\frac{2(m - 5) - (n - 3) + 2}{7} + r(G') = \frac{2m - n + 2}{7} - 1 + r(G')$.
But $S\cup\{b\}$ is a feedback vertex set of $G$ of size at most $\frac{2m - n + 2}{7} + r(G')$, so $r(G') > 0$.  
If $G'=B$, then $B\in\F_{s,t}$ for some integers $s,t$ with $s \geq 1$ and $0 \leq t \leq s$, a contradiction.  
So $G'$ is obtained from $B$ by attaching the leaf $b'$.  
Let $b''$ be the neighbor of $b'$ in $B$.
But then $G$ is obtained from $B\circ(a_1a_2,c_1c_2,b'')$ by subdividing an edge, so $G\in\F_{s+1,t+1}$, a contradiction.
\end{proof}

\begin{claim} \label{girth 5}
$G$ has girth at least five.
\end{claim}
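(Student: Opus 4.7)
Since $G$ is triangle-free by Claim \ref{claim:notriangle}, to establish girth at least five it suffices to rule out $4$-cycles. My plan is to suppose for contradiction that $C = v_1 v_2 v_3 v_4$ is a $4$-cycle in $G$, use Claim \ref{cubic in short cycle} to conclude that each $v_i$ has degree three, and let $u_i$ denote the third neighbor of $v_i$ (the one not on $C$). Because $G$ is triangle-free, each $u_i \notin V(C)$, and moreover $v_4 u_1 \notin E(G)$ (otherwise $v_1 v_4 u_1$ would close a triangle), and similarly $v_4 u_3 \notin E(G)$. In particular the three consecutive degree-three vertices $v_1, v_2, v_3$ satisfy the setup of Claim \ref{claim:op2makesbad}, with $a_1 = v_4$, $a_2 = u_1$, $c_1 = v_4$, and $c_2 = u_3$.

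Applying Claim \ref{claim:op2makesbad} to the path $v_1 v_2 v_3$ produces the graph $G' = (G - \{v_1, v_2, v_3\}) + v_4 u_1 + v_4 u_3$, and the conclusion of that claim is that $G'$ contains two disjoint cycles of length less than five. The plan is to derive the contradiction by showing that, on the contrary, every cycle of $G'$ of length less than five must pass through $v_4$.

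To verify this last point I would take a short cycle $C'$ of $G'$ and split into cases. If $C'$ uses one of the newly added edges $v_4 u_1$ or $v_4 u_3$, then $v_4 \in V(C')$ directly. Otherwise $C' \subseteq G - \{v_1,v_2,v_3\} \subseteq G$, so $C'$ is a short cycle of $G$; since by assumption $G$ has no two disjoint short cycles and $C$ is itself a short cycle of $G$, the cycles $C$ and $C'$ must share a vertex. But the only vertex of $C$ that survives in $G - \{v_1, v_2, v_3\}$ is $v_4$, so again $v_4 \in V(C')$. Hence no two short cycles of $G'$ can be disjoint, contradicting Claim \ref{claim:op2makesbad} and completing the proof.

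The argument is essentially immediate once set up; there is no serious obstacle beyond verifying carefully that the hypotheses of Claim \ref{claim:op2makesbad} apply (which amounts to using triangle-freeness to rule out $v_4 u_1$ and $v_4 u_3$ from $E(G)$ and using Claim \ref{cubic in short cycle} to ensure all three path vertices are cubic).
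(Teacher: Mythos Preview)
Your approach is the same as the paper's, but there is one genuine gap. Claim~\ref{claim:op2makesbad} is proved via the identity $G = G' \circ (a_1a_2, c_1c_2, b')$, and this reconstruction only works when the two added edges $a_1a_2$ and $c_1c_2$ are distinct; if they coincide, the $\circ$ operation produces a different graph and the conclusion of the claim can fail. In your application the new edges are $v_4u_1$ and $v_4u_3$, which coincide exactly when $u_1 = u_3$, and you never rule this out. Concretely, if $u_1 = u_3 = u$ then $\{v_1,v_3\}$ and $\{v_2,v_4,u\}$ induce a $K_{2,3}$ in $G$; the simple graph $G'$ acquires only the single edge $v_4u$, and your own argument then shows every short cycle of $G'$ contains $v_4$ --- so $G'$ has no two disjoint short cycles and the claim does not deliver its conclusion here.

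The paper handles this by first disposing of the case where \emph{both} pairs of opposite outside neighbors coincide ($u_1=u_3$ and $u_2=u_4$): then $G$ is $K_{3,3}$ or a one-edge subdivision of $K_{3,3}$, and $\phi(G)=2 \le \frac{2m-n+2}{7}$ is checked directly. Otherwise, by the symmetry of the $4$-cycle one may relabel so that $u_1 \ne u_3$, after which your argument (identical to the paper's) goes through.
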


\begin{proof}
Suppose to the contrary that $G$ contains a cycle $C$ of length less than five. 
Since $G$ is triangle-free, $C$ is a 4-cycle.  
Let $C=abcda$.
By Claim \ref{cubic in short cycle}, $a,b,c$ and $d$ have neighbors $a',b',c',$ and $d'$ not in $C$, respectively.
Since $G$ is triangle-free, $\{a',c'\}$ is disjoint from $\{b',d'\}$.

Suppose that $a'=c'$ and $b'=d'$.
If $a'$ is adjacent to $b'$, then $G=K_{3,3}$ and $\phi(G)=2=\frac{2m-n+2}{7}$, a contradiction.
So $a'$ is not adjacent to $b'$.
Since $G$ is internally 3-edge-connected, there exists a vertex $v$ of degree two in $G$ adjacent to $a'$ and $b'$, so $G$ can be obtained from $K_{3,3}$ by subdividing an edge. 
Hence $\phi(G)=2<\frac{2m-n+2}{7}$, a contradiction.

Therefore we may assume without loss of generality that $a' \neq c'$.  
Since $G$ is triangle-free, $d$ is not adjacent to $a'$ or $c'$.
Let $G' = (G-\{a,b,c\})+\{a'd, c'd\}$.
By Claim \ref{claim:op2makesbad}, $G'$ contains two disjoint cycles $D_1,D_2$ of length less than five.
Since $G$ does not contain two such cycles, one of them, say $D_1$, contains at least one edge in $\{a'd,c'd\}$.
Therefore $d\in V(D_1)$.  
Since $D_2$ is in $G'$ and is disjoint from $D_1$, $D_2$ does not contain any of $\{a,b,c,d\}$.
Therefore $C$ and $D_2$ are two disjoint cycle of length less than five in $G$, a contradiction.
\end{proof}

Note that Claims \ref{claim:contracting} and \ref{girth 5} imply that $G$ is cubic.

\begin{claim} \label{claim:vertexadjacentto5-cycles}
Let $v\in V(G)$, and let $a$ and $b$ be two distinct neighbors of $v$.  
Let $a_1$ and $a_2$ be the neighbors of $a$ other than $v$, and let $b_1$ and $b_2$ be the neighbors of $b$ other than $v$.  
Then $G-v$ contains two disjoint 5-cycles, where one contains the path $a_1aa_2$ and the other contains the path $b_1bb_2$.
\end{claim}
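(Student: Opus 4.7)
The plan is to apply Claim \ref{claim:op2makesbad} to the path $a - v - b$ in $G$. Since $G$ is cubic (by Claims \ref{claim:contracting} and \ref{girth 5}), the vertices $a, v, b$ are distinct vertices of degree three forming a path in $G$, with $a_1, a_2$ being the neighbors of $a$ other than $v$ and $b_1, b_2$ being the neighbors of $b$ other than $v$. Claim \ref{claim:op2makesbad} then yields that the graph
\[
G' = (G - \{a, v, b\}) + a_1a_2 + b_1b_2
\]
contains two disjoint cycles $D_1, D_2$ of length less than five.

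Because $G$ itself has girth at least five by Claim \ref{girth 5}, any cycle of length less than five in $G'$ must use at least one of the two new edges $a_1a_2, b_1b_2$. Since $D_1$ and $D_2$ are vertex-disjoint and there are only two new edges (which share no endpoint, as $G$ is triangle-free), one of the cycles, say $D_1$, uses only $a_1a_2$ and the other, $D_2$, uses only $b_1b_2$. I will next argue that $D_1$ has length exactly four: a triangle $a_1 - a_2 - w - a_1$ would give $w$ as a common neighbor of $a_1$ and $a_2$ in $G$, producing a 4-cycle $a - a_1 - w - a_2 - a$ in $G$, contradicting girth five. An identical argument applies to $D_2$.

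Finally, I replace the new edge $a_1a_2$ in $D_1$ with the path $a_1 - a - a_2$ (using the edges $a_1a, aa_2$ of $G$) to obtain a 5-cycle in $G - v$ containing the path $a_1aa_2$; likewise from $D_2$ I obtain a 5-cycle in $G-v$ containing the path $b_1bb_2$. These two 5-cycles are vertex-disjoint: their nontrivial intersections with $G - \{a,v,b\}$ are $V(D_1)$ and $V(D_2)$, which are disjoint, and the only remaining vertices are $a$ and $b$ themselves, which are distinct and each absent from the opposite cycle. The only step requiring any care is ensuring that the two disjoint short cycles in $G'$ do not both use the same new edge, which is immediate from their vertex-disjointness; everything else is a mechanical application of Claim \ref{claim:op2makesbad} combined with the girth hypothesis.
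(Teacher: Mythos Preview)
Your proof is correct and follows essentially the same approach as the paper: apply Claim~\ref{claim:op2makesbad} to the path $avb$, use girth at least five to force each of the two disjoint short cycles in $G'$ through a distinct new edge, and lift back to disjoint 5-cycles in $G-v$. One small slip: the reason the new edges $a_1a_2$ and $b_1b_2$ share no endpoint is that $G$ has girth at least five (a shared endpoint $a_i=b_j$ would yield the 4-cycle $v\,a\,a_i\,b\,v$), not merely that $G$ is triangle-free.
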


\begin{proof}
Let $G' = (G-\{a,v,b\})+a_1a_2+b_1b_2$.
By Claim \ref{claim:op2makesbad}, there are two disjoint cycles in $G$ of length less than five, where one of them contains $a_1a_2$ and the other contains $b_1b_2$, since $G$ has girth at least five.
So $G$ contains two disjoint cycles of length at most five, where one contains $a_1aa_2$ and the other contains $b_1bb_2$.
These two cycles have length five since $G$ has girth at least five.
\end{proof}

By \cite[Theorem 6.1]{kl}, $G$ is the dodecahedron, which is in $\F_{5,5}$, a contradiction.  
This completes the proof of Theorem \ref{strong subcubic}.

\bigskip

\noindent{\bf Acknowledgement.}
The authors thank the anonymous referees for careful reading and suggestions.



\begin{thebibliography}{99}
	\bibitem{AB79}
		M. O. Albertson and D. M. Berman, A conjecture on planar graphs. Graph Theory and Related Topics, Academic Press 1979.

	\bibitem{amt}
		N. Alon, D. Mubayi and R. Thomas, {\it Large induced forests in sparse graphs}, J. Graph Theory 38 (2001), 113--123.

	\bibitem{B76}
		O. V. Borodin, {\it A proof of B.\ Gr\"unbaum's conjecture on the acyclic 5-colorability of planar graphs}, Dokl. Akad. Nauk SSSR 231 (1976), 18--20.

	\bibitem{DMP14}
		F. Dross, M. Montassier and A. Pinlou, {\it Large induced forests in planar graphs with girth 4 or 5}, arXiv:1409.1348.

	\bibitem{DMP15}
		F. Dross, M. Montassier and A. Pinlou, {\it A lower bound on the order of the largest induced forest in planar graphs with high girth}, Discrete Appl. Math. 214 (2016), 99--107

	\bibitem{K72}
		R. M. Karp, {\it Reducibility among combinatorial problems}, Complexity of computer computations, {P}roc. {S}ympos., {IBM} {T}homas {J}. {W}atson {R}es. {C}enter, {Y}orktown {H}eights, {N}.{Y}., (1972), 85--103.

	\bibitem{kl}
		T. Kelly and C.-H. Liu, {\it Size of the largest induced forest in subcubic graphs of girth at least four and five}, arXiv:1603.03855.

	\bibitem{KLS10}
		{\L}. Kowalik, B. Lu{\v{z}}ar and R. {\v{S}}krekovski, {\it An improved bound on the largest induced forests for triangle-free planar graphs}, Discrete Math. Theor. Comput. Sci. 12 (2010), 87--100.

	\bibitem{p}
		N. Punnim, {\it The decycling number of regular graphs}, Thai J. Math. 4 (2006), 145--161.

	\bibitem{s}
		M. R. Salavatipour, {\it Large induced forests in triangle-free planar graphs}, Graphs Combin. 22 (2006), 113--126.
\end{thebibliography}
\end{document}